\newcommand{\dM}{b}
\newtheorem{thm}{Theorem}[section]
\newtheorem{lem}[thm]{Lemma}
\newtheorem{Def}[thm]{Definition}
\newtheorem{examp}[thm]{Example}
\begin{document}

\begin{frontmatter}

\title{Density Estimation on Manifolds with Boundary}

\author{Tyrus Berry\corref{cor1}}
\address{4400 University Drive, Fairfax, Virginia 22030}
\ead{tberry@gmu.edu}
\cortext[cor1]{Corresponding author}
\author{Timothy Sauer\corref{cor2}}
\address{4400 University Drive, Fairfax, Virginia 22030}
\ead{tsauer@gmu.edu}

\begin{abstract}
Density estimation is a crucial component of many machine learning methods, and manifold learning in particular, where geometry is to be constructed from data alone.   A significant practical limitation of the current density estimation literature is that methods have not been developed for manifolds with boundary, except in simple cases of linear manifolds where the location of the boundary is assumed to be known.  We overcome this limitation by developing a density estimation method for manifolds with boundary that does not require any prior knowledge of the location of the boundary.  To accomplish this we introduce statistics that provably estimate the distance and direction of the boundary, which allows us to apply a cut-and-normalize boundary correction.  By combining multiple cut-and-normalize estimators we introduce a consistent kernel density estimator that has uniform bias, at interior and boundary points, on manifolds with boundary.
\end{abstract}

\begin{keyword}
kernel density estimation; manifold learning; boundary correction; geometric prior
\end{keyword}

\end{frontmatter}

\section{Introduction}\label{intro}

Nonparametric density estimation has become an important tool in statistics with a wide range of applications to machine learning, especially for high-dimensional data.  The increasing size and complexity of measured data creates the possibility of understanding increasingly complicated phenomena for which there may not be sufficient `first principles' understanding to enable effective parametric modeling.  The exponential relationship between model complexity (often quantified as dimensionality) and data requirements, colloquially known as the {\it curse of dimensionality}, demands that new and innovative priors be developed.  A particularly effective assumption is the \emph{geometric prior}, which assumes that the data lies on a manifold that is embedded in the ambient Euclidean space where the data is sampled.  The geometric prior is nonparametric in that it does not assume a particular manifold or parametric form, merely that the data is restricted to lying on \emph{some} manifold.  This prior allows us to separate the \emph{intrinsic} dimensionality of the manifold, which may be low, from the \emph{extrinsic} dimensionality of the ambient space, which is often high.  

Recently the geometric prior has received some attention in the density estimation field \cite{hendriks1990nonparametric,pelletier2005kernel,kim2013geometric,nips2009}, although use of these methods remains restricted for several reasons.  For example, the methods of \cite{hendriks1990nonparametric,pelletier2005kernel} require the structure of the manifold to be known a priori.  Recently, in the applied harmonic analysis literature a method known as \emph{diffusion maps} has been introduced which learns the structure of the manifold from the data \cite{BN,diffusion}.  These methods have also been extended to a large class of noncompact manifolds \cite{heinThesis,hein1,hein2} with natural assumptions on the geometry of the manifold.  The assumptions introduced in \cite{heinThesis} include all compact manifolds, as well as many non-compact manifolds, such as any linear manifold, which implies that standard kernel density estimation theory on Euclidean spaces is included as a special case.  While these manifold learning methods make implicit assumptions on the geometry of the underlying manifold (such as bounded curvature), kernel density estimation requires knowledge of the dimension of the manifold in order to obtain the correct normalization factor.  For ease of exposition, we will assume the dimension of the manifold is known, although this is not necessary: In Appendix B we include a practical method of empirically tuning the bandwidth parameter that also estimates the dimension.

The remaining significant limitation of applying existing manifold density estimators to real problems is the restriction to manifolds without boundary. One exception is the special case of subsets of the real line where the location of the boundary is assumed to be known. This case has been thoroughly studied, and consistent estimators have been developed \cite{boundary93,boundary96,boundary99,boundary00,boundary05,boundary14}.

Here we introduce a consistent kernel density estimator for manifolds with (unknown) boundary that has the same asymptotic bias in the interior as on the boundary. The first obstacle to such an estimator is that a conventional kernel does not integrate to one near the boundary. Therefore the normalization factor must be corrected in a way that is based on the distance to the boundary, which is not known {\it a priori}.

To locate the boundary, we couple the standard kernel density estimator (KDE) with a second calculation, a kernel weighted average of the vectors from every point in the data set to every other point, which we call the boundary direction estimator (BDE).  We present asymptotic analysis of the BDE that shows that if the base point is near a boundary, the negative of the resulting average vector will point toward the nearest point on the boundary.  We also use the asymptotic expansion of this vector to find a lower bound on the distance to the boundary.  Our new density estimate at this base point does not include the data which lie beyond the lower bound in the direction of the boundary.  This creates a virtual boundary in the tangent space which is simply a hyperplane (dimension one less than the manifold) at a known distance from the base point.  Creating a known virtual boundary allows us to bypass the above obstacle -- we can now renormalize the kernel so that it integrates exactly to one at each base point, similar to the cut-and-normalize kernels that are used when the boundary is {\it a priori} known.  For points in the interior (or for manifolds without boundary), the lower bound on the distance to the boundary goes to infinity in the limit of large data, and we recover the standard kernel density estimation formula.  Moreover, using standard methods of constructing higher order kernels, we find a formula for a kernel density estimate with the same asymptotic bias for interior points and points on the boundary.

In Section \ref{background} we briefly review nonparametric density estimation on embedded manifolds.  The boundary correction method using BDE is introduced in Section \ref{boundary}, and the results are demonstrated on several illustrative examples.  We conclude with a brief discussion in Section \ref{discussion}.

\section{Background}\label{background}

Assume one is given $N$ samples $\{X_i\}_{i=1}^N$ (often assumed to be independent) of a probability distribution on $R^n$  with a density function $f(x)$. The problem of nonparametric density estimation is to find an estimator $f_N(x)$ that approximates the true density function.   
A kernel density estimator $f_N$ is typically constructed  \cite{Parzen62,loftsgaarden65,whittle58} as
\begin{align}\label{ambientKDE} f_N(x) = \frac{1}{N h_N^n}\sum_{i=1}^N K\left(\frac{||x-X_i||}{h_N}\right) \end{align}
where the kernel function is defined via a univariate shape function $K$ and $h_N \to 0$ as $N\to \infty$.  The kernel function must be normalized to integrate to $1$ for each $N$ to have a consistent estimator.

%

The standard KDE formulation \eqref{ambientKDE} assumes that the density is supported on the Euclidean space from which the data is sampled. However, real data may be restricted to lie on a lower dimensional submanifold of this Euclidean space.  This assumption, which we call the \emph{geometric prior}, is a potential workaround to the curse of dimensionality for high dimensional data.  Since the geometric prior assumes that the density is supported on a submanifold of the ambient Euclidean space, we may assume that the intrinsic dimensionality is small even when the extrinsic dimensionality is large.  

Nonparametric density estimation on manifolds essentially began with Hendriks \cite{hendriks1990nonparametric}, who modernized the Fourier approach of \cite{whittle58} using a generalized Fourier analysis on compact Riemannian manifolds without boundary, based on the eigenfunctions of the Laplace-Beltrami operator.  The limitation of \cite{hendriks1990nonparametric} in practice is that it requires the eigenfunctions of the Laplace-Beltrami operator on the manifold to be known, which is equivalent to knowing the entire geometry.  A kernel-based method of density estimation was introduced in \cite{pelletier2005kernel}. In this case the kernel was based on the geodesic distance between points on the manifold, which is again equivalent to knowing the entire geometry.  More recently, a method which uses kernels defined on the tangent space of the manifold was introduced \cite{kim2013geometric}.  However, evaluating the kernel of \cite{kim2013geometric} requires lifting points on the manifold to the tangent space via the exponential map, which yet again is equivalent to knowing the geometry of the manifold.  (See, for example, \cite{laplacianBook} which shows that the Riemannian metric can be recovered from either the Laplace-Beltrami operator, the geodesic distance function, or the exponential map.)  The results of \cite{hendriks1990nonparametric,pelletier2005kernel,kim2013geometric}, in addition to being restricted to compact manifolds without boundary, are limited to manifolds which are known {\it a priori}, and cannot be applied to data lying on an unknown manifold embedded in Euclidean space.  

The insight of \cite{diffusion} was that as the bandwidth $h_N$ decreases and the kernel function approaches a delta function, the kernel is essentially zero outside a ball of radius $h_N$.  Inside this ball, the geodesic distance on the embedded manifold and the Euclidean distance in the ambient space are equal up to an error which is higher order in $h_N$.  This fact follows directly for compact manifolds. Although it is not true for general manifolds, a weaker condition than compactness is sufficient, as first shown by \cite{heinThesis}.  We summarize this weaker condition by saying that a point $x$ on the manifold is {\it tangible} if the injectivity radius is nonzero, and the ratio between the  distance in the ambient space and the geodesic distance to nearby points is bounded away from zero. Theorem \ref{cor2} below shows that it is possible to consistently estimate the density at any tangible point in the interior of a manifold. This condition is explained in more detail in Appendix \ref{boundaryApp}.

The equivalence of the ambient and geodesic distances on small scales suggests that for kernels with sufficiently fast decay at infinity, the approaches of \cite{pelletier2005kernel,kim2013geometric,nips2009,heinThesis} are equivalent, with the exception of an additional bias term appearing in methods based on the Euclidean distance which depends on the extrinsic curvature of the embedding, as shown in \cite{heinThesis}.  This fact first came to light in \cite{diffusion}, although the focus was on estimating the Laplace-Beltrami operator on the unknown manifold, so the authors did not emphasize their density estimate result or analyze the variance of their estimate.  The fact was later pointed out in \cite{nips2009}, where the bias and variance of the kernel density estimate were computed.  The results of \cite{diffusion,heinThesis} apply to shape function kernels of the form $K\left( \frac{||x-X_i||}{h_N} \right)$, where $K:[0,\infty) \to [0,\infty)$ is assumed to have exponential decay.  We note that this includes all compactly supported kernels, such as the Epanechnikov kernel \cite{epanechnikov1969non} and other similar kernels that are often used in density estimation.  

Theorem \ref{cor2}, first shown in \cite{heinThesis}, shows that KDE is straightforward when $X_i$ are random variables sampled according to a density $f(x)$ on an embedded manifold with no boundary.  
\begin{thm}[KDE on Embedded Manifolds ]\label{cor2} Let $\tilde f$ be a density supported on an $m$-dimensional Riemannian manifold $\mathcal{M} \subset \mathbb{R}^n$ without boundary.  Let $\tilde f = f\, dV$ where $dV$ is the volume form on $\mathcal{M}$ inherited from the embedding and $f \in \mathcal{C}^4(\mathcal{M})$ is bounded above by a polynomial.  Let $K:[0,\infty)\to [0,\infty)$ have exponential decay and define $m_0 = \int_{\mathbb{R}^m} K\left(||z||\right) \, dz$.  If $x\in\mathcal{M}$ is a tangible point and $X_i$ are independent samples of $f$ then
\[ f_{h,N}(x) \equiv \frac{1}{N m_0 h^m}\sum_{i=1}^N K\left(\frac{||x-X_i ||}{h}\right) \]
is a consistent estimator of $f(x)$ with bias $\mathbb{E}\left[f_{h,N}(x) - f(x)\right] = \mathcal{O}(h^2)$ and variance $\textup{var}\left(f_{h,N}(x) - f(x)\right) = \mathcal{O}\left(\frac{h^{-m}}{N} f(x)\right)$.
\end{thm}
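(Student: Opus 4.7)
The plan is to prove both bias and variance via a standard localization-plus-Taylor-expansion argument in Riemannian normal coordinates, using the tangibility of $x$ to control tail contributions and the small-scale equivalence of ambient and geodesic distances to reduce to an integral over $T_x\mathcal{M} \cong \mathbb{R}^m$.

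For the bias, I would start from
\[ \mathbb{E}[f_{h,N}(x)] = \frac{1}{m_0 h^m}\int_{\mathcal{M}} K\!\left(\frac{||x-y||}{h}\right) f(y)\, dV(y) \]
and split the domain into a geodesic ball $B_r(x)\cap\mathcal{M}$ of radius $r$ strictly below the injectivity radius and its complement. On the complement, the tangibility condition combined with the exponential decay of $K$ and the polynomial growth bound on $f$ makes the tail $\mathcal{O}(h^\infty)$, negligible compared to the target $\mathcal{O}(h^2)$. On the interior piece I would switch to normal coordinates via $y=\exp_x(z)$, $z\in T_x\mathcal{M}$, and rescale $z=hu$; three Taylor expansions then enter the integrand: (i) the embedding expansion $||x-\exp_x(hu)||^2 = h^2||u||^2 + h^4 Q(u) + \mathcal{O}(h^6)$, where $Q$ is a quartic built from the second fundamental form (this correction is the sole difference from geodesic-distance KDE); (ii) the pulled-back volume element $dV = \left(1-\tfrac{h^2}{6}\mathrm{Ric}_x(u,u)+\mathcal{O}(h^4)\right) du$; and (iii) $f(\exp_x(hu)) = f(x)+h\,df_x(u)+\tfrac{h^2}{2}(\mathrm{Hess}\,f)_x(u,u)+\mathcal{O}(h^3)$. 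Expanding $K$ to first order around $K(||u||)$ using (i), the leading contribution is $(f(x)/m_0)\int_{\mathbb{R}^m} K(||u||)\,du = f(x)$; odd-degree monomials in $u$ vanish against the radially symmetric weight $K(||u||)\,du$, so the first nonzero correction is $\mathcal{O}(h^2)$, combining the Hessian of $f$, the Ricci term from the volume form, and the extrinsic-curvature term from $Q(u)$.

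For the variance, the same localization-and-rescaling applied to $\mathbb{E}[(h^{-m} K(||x-X_1||/h)/m_0)^2]$ produces a leading term of order $h^{-m}\,f(x)/m_0^2 \cdot \int K(||u||)^2\,du$, so that
\[ \mathrm{var}(f_{h,N}(x)) = \frac{1}{N}\left(\frac{c_K\,f(x)}{h^m}+o(h^{-m})\right)-\frac{1}{N}\mathbb{E}[f_{h,N}(x)]^2 = \mathcal{O}\!\left(\frac{h^{-m}}{N}f(x)\right), \]
since the subtracted squared expectation contributes only $\mathcal{O}(1/N)$ and is dominated by the $h^{-m}/N$ term. The main obstacle is the careful bookkeeping of the extrinsic-curvature quartic $Q(u)$ inside the $K$-expansion together with the tail bound under only pointwise tangibility rather than compactness; these are precisely the technical steps carried out in \cite{heinThesis}, and my proof would amount to assembling them in the form above.
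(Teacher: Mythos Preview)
Your proposal is correct and follows essentially the same route as the paper: localization via the tangibility hypothesis and exponential decay of $K$ (the paper packages this as Lemma~\ref{lem1}), passage to geodesic normal coordinates, and the three Taylor expansions (embedding via the second fundamental form, volume form via the Ricci curvature, and the density itself), with odd moments vanishing by radial symmetry to leave an $\mathcal{O}(h^2)$ bias. The only organizational difference is that the paper does not prove Theorem~\ref{cor2} directly; it instead proves the more general boundary version (Theorem~\ref{thm2}) by the same expansion argument and then recovers Theorem~\ref{cor2} as the special case $b_x/h\to\infty$, where $m_0^\partial(x)\to m_0$ and $m_1^\partial(x)\to 0$.
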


Theorem \ref{cor2} follows directly from Theorem \ref{thm2}, which is proved in Appendix \ref{boundaryApp}.  To find the density $f(x)$ written with respect to the volume form inherited from the embedding, Theorem \ref{cor2} implies
\begin{align}\label{standardKDE} f_h(x) \equiv \lim_{N\to\infty}\frac{1}{N m_0 h^m}\sum_{i=1}^N K\left(\frac{||x-X_i ||}{h}\right) = f(x) + \mathcal{O}(h^2). \end{align}
For example, one may use the Gaussian kernel
\[ K(z) = \pi^{-m/2}\exp\left(-||z||^2 \right), \]
which has $m_0 = 1$. Notice that \eqref{standardKDE} is simply a standard KDE formula in $\mathbb{R}^n$, except that $h^m$ appearing in the denominator would normally be $h^n$.  Intuitively, this is because the data lies on an $m$-dimensional subspace of the $n$-dimensional ambient space.

\section{Boundary Correction}\label{boundary}

The topic of kernel density estimation near a boundary has been thoroughly explored in case the distribution is supported on a subinterval $[b, \infty]$ of the real line, and with the assumption that $b$ is known. Using a naive kernel such as \eqref{standardKDE} results in an estimate that is not even consistent near $b$. An early method that achieved consistency on the boundary was the cut-and-normalize method \cite{gasser1979kernel}, although the bias was only order $h$ on the boundary, despite being order $h^2$ in the interior. Various alternatives were proposed to obtain bias uniform over the domain and boundary. 
These methods include reflecting the data \cite{schuster1985incorporating,karunamuni2008some}, generalized jackknifing \cite{boundary93,boundary96,boundary99,kyung1998nonparametric}, translation-based methods \cite{hall2002new}, and the use of specially-designed asymmetric kernels from the beta and gamma distributions \cite{boundary00,boundary05,boundary14}. The cut-and-normalize method was extended to order $h^2$ on the boundary in \cite{kyung1998nonparametric}.

The goal of this section is to generalize the cut-and-normalize approach to an order $h^2$ method, including boundary and interior, on embedded manifolds {\it where the location of the boundary is not known}.
The standard KDE formula \eqref{standardKDE} fails for manifolds with boundary because the domain of integration is no longer symmetric near the boundary.  For a point $x$ on the boundary $\partial \mathcal{M}$, the integral over the local region $N_{h}(x)$ approximates the integral over the half space $T_x\mathcal{M} \cong \mathbb{R}^{m-1} \oplus \mathbb{R}^+$.   The zeroth moment of local kernel $m_0(x)$ is defined to be the integral over $\mathbb{R}^m$, so dividing by this normalization constant will lead to a biased estimator even in the limit $h\to 0$.  While technically the estimator is still asymptotically unbiased for all interior points, for fixed $h$ the additional bias from using the incorrect normalization constant can be quite large for points within geodesic distance $h$ of the boundary. 

To fix the bias, we will estimate the distance $b_x$ and direction $\eta_x$ to $\partial \mathcal{M}$ for every point $x$ in $\mathcal{M}$. Our motivation is that if they are known, Theorem \ref{thm2} below gives a consistent estimate of the density $f(x)$ both in the interior and the boundary. Next, we compute three more variants of the KDE computation \eqref{standardKDE} to estimate $b_x$ and $\eta_x$, and to extend the  second-order estimate of $f(x)$ everywhere. Figure \ref{workflow} shows the proposed workflow.

 \begin{figure}[h] 
\begin{center}
\includegraphics[width=.7\linewidth]{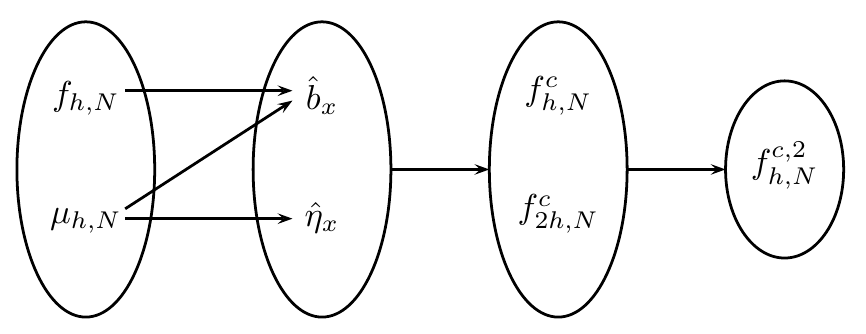}
\end{center}
\caption{Workflow schematic. At each point $x$, the standard KDE \eqref{standardKDE} $f_{h,N}$ is combined with the boundary direction estimator BDE \eqref{bde} $\mu_{h,N}$ to estimate the distance $b_x$ to $\partial \mathcal{M}$. The boundary direction $\eta_x$ is estimated by the unit vector $\hat{\eta}_x$ in the direction of $\mu_{h,N}$. Cut-and-normalize estimators $f^c_{h,N}$ and $f^c_{2h,N}$ are calculated and combined to get the second-order estimate $f^{c,2}_{h,N}$. \label{workflow}}
\end{figure}

First,  in section \ref{distdirection} we compute the boundary direction estimator (BDE) denoted
\begin{equation} \label{bde} \mu_{h,N}(x) \equiv \frac{1}{N h^{m+1}}\sum_{i=1}^N K\left(\frac{||x-X_i ||}{h}\right)(X_i-x). 
\end{equation}
The BDE is sensitive to the presence of the boundary, and we will combine the KDE \eqref{standardKDE} with the BDE \eqref{bde} to derive estimates of $b_x$ and $\eta_x$.

Second, with $b_x$ and $\eta_x$ known, in section \ref{cutnormalize} we approximate $\partial \mathcal{M}$ as a hyperplane in the tangent space to more accurately normalize a cut-and-normalize kernel denoted $f^c_{h,N}$. Third, section \ref{extrap} repeats the cut-and-normalize kernel with bandwidth $2h$, so that Richardson extrapolation can be used to decrease the order of the error of $f(x)$ to $O(h^2)$ at points $x$ up to and including the boundary.

\subsection{Distance and Direction to the Boundary}\label{distdirection}

Correcting the bias of the standard KDE \eqref{standardKDE} near the boundary requires computing the true zeroth moment of the kernel. Equation \eqref{KDEb} is an adjusted version of \eqref{standardKDE} with the correct normalization.

\begin{thm}[KDE near the Boundary]\label{thm2} Assume the hypotheses of Theorem \ref{cor2} except with $\mathcal{M}$ a manifold with boundary $\partial \mathcal{M}$ of bounded curvature.  Let $x \in \mathcal{M}$ be tangible, let $\dM_x$ be the geodesic distance to the boundary, and let $\eta_x \in T_x\mathcal{M}$ be a unit vector in the direction of the boundary.  Then for $h$ sufficiently small, $\eta_x$ is well defined and
\begin{equation} f^{\partial}_{h,N}(x) \equiv \frac{1}{N m^{\partial}_0(x) h^m}\sum_{i=1}^N K\left(\frac{||x-X_i ||}{h}\right) \label{KDEb}
\end{equation}
is a consistent estimator of $f(x)$. Here
\[ m^{\partial}_0(x) = \int_{\mathbb{R}^{m-1}}\int_{-\infty}^{\dM_x/h} K(||z_{\perp}|| + |z_{\parallel}|) \, dz_{\parallel} dz_{\perp} \]
where $z_{\perp} \perp \eta_x$ and $z_{\parallel}$ is a scalar.  Moreover, $f^{\partial}_{h,N}(x)$ has bias
\begin{equation} \label{kdebias}\mathbb{E}\left[f^{\partial}_{h,N}(x) - f(x)\right] = h m^{\partial}_1(x) \eta_x \cdot \nabla f(x) + \mathcal{O}(h^2) 
\end{equation}
and variance
\[ \textup{var}\left(f^{\partial}_{h,N}(x) - f(x)\right) = \frac{h^{-m}}{N} \frac{m^{2,\partial}_0(x)}{m^{\partial}_0(x)}f(x) + \mathcal{O}(1/N), \] where
\[ m_1^{\partial}(x) = -\int_{\mathbb{R}^{m-1}}\int_{-\infty}^{\dM_x/h} K(||z_{\perp}|| + |z_{\parallel}|) z_{\parallel} \, dz_{\parallel} dz_{\perp} \]
and
\[ m_0^{2,\partial}(x) = \int_{\mathbb{R}^{m-1}}\int_{-\infty}^{\dM_x/h} K\left(||z_{\perp}|| + |z_{\parallel}|\right)^2 \, dz_{\parallel} dz_{\perp}. \]
\end{thm}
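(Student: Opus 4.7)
The plan is to analyze the expectation and variance of $f^{\partial}_{h,N}(x)$ by pulling the integrals back to the tangent space $T_x\mathcal{M}$ via the exponential map, thereby reducing the problem to the asymptotic expansion of integrals of $K$ against $f$ on (almost) a half-space. Since the $X_i$ are i.i.d.,
\[
\mathbb{E}[f^{\partial}_{h,N}(x)]
=\frac{1}{m_0^{\partial}(x)\,h^m}\int_{\mathcal{M}} K\!\left(\tfrac{\|x-y\|}{h}\right)f(y)\,dV(y).
\]
I would first invoke the tangibility hypothesis and the exponential decay of $K$ (exactly as in the proof of Theorem \ref{cor2}) to restrict the domain of integration to a neighborhood of $x$ lying inside the injectivity radius, losing only terms that decay faster than any power of $h$. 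Inside that neighborhood the change of variables $y=\exp_x(z)$ yields $\|x-y\|=\|z\|+\mathcal{O}(\|z\|^3)$ (from embedding curvature) and $dV(y)=(1+\mathcal{O}(\|z\|^2))\,dz$ (from intrinsic curvature). After rescaling $z=hu$, the expectation becomes
\[
\mathbb{E}[f^{\partial}_{h,N}(x)]
=\frac{1}{m_0^{\partial}(x)}\int_{\Omega_h}K(\|u\|)\,f(\exp_x(hu))\,du+\mathcal{O}(h^2),
\]
where $\Omega_h\subset T_x\mathcal{M}$ is the rescaled preimage of $\mathcal{M}$ under $\exp_x$.

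The geometric heart of the proof is to show that $\Omega_h$ agrees with the half-space $H_h=\{u:u\cdot\eta_x<b_x/h\}$ up to a $K$-weighted error of size $\mathcal{O}(h^2)$. For $h$ small enough, the bounded-curvature hypothesis on $\partial\mathcal{M}$ implies a uniform tubular neighborhood of the boundary, so the nearest-point projection of $x$ onto $\partial\mathcal{M}$ is unique, which is precisely what is needed to make $\eta_x$ well defined. Pulling back $\partial\mathcal{M}$ by $\exp_x$, one obtains a hypersurface in $T_x\mathcal{M}$ tangent at $b_x\eta_x$ to the affine hyperplane $\{z\cdot\eta_x=b_x\}$ with second-order deviation controlled by the second fundamental form of $\partial\mathcal{M}$; after the scaling $z=hu$ this becomes an $\mathcal{O}(h)$ transverse deviation on rescaled unit scales, which the exponential decay of $K$ converts to an $\mathcal{O}(h^2)$ error in the integral for any smooth integrand of polynomial growth.

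With $\Omega_h$ replaced by $H_h$, a Taylor expansion $f(\exp_x(hu))=f(x)+h\,\nabla f(x)\cdot u+\mathcal{O}(h^2\|u\|^2)$ gives the expansion of the bias. The constant term reproduces $f(x)$ because $\int_{H_h}K(\|u\|)\,du=m_0^{\partial}(x)$ by definition. Decomposing $u=u_{\perp}+u_{\parallel}\eta_x$, the integral of the transverse gradient component $\nabla f(x)\cdot u_{\perp}$ against $K$ vanishes by symmetry of the kernel in $u_{\perp}$, while the remaining $u_{\parallel}$ integral produces precisely the claimed $h\,m_1^{\partial}(x)\,\eta_x\cdot\nabla f(x)$ term (up to the sign convention built into the definition of $m_1^{\partial}$). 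For the variance, the same change of variables applied to $\mathbb{E}[K(\|x-X_1\|/h)^2]$ gives $h^m m_0^{2,\partial}(x)\,f(x)+\mathcal{O}(h^{m+1})$, and the standard identity $\mathrm{var}(f^{\partial}_{h,N})=\frac{1}{N}(\mathbb{E}[g_1^2]-(\mathbb{E}[g_1])^2)$ with $g_1=(m_0^{\partial}(x)h^m)^{-1}K(\|x-X_1\|/h)$ then yields the stated leading-order $h^{-m}/N$ asymptotics, the square of the mean being lower order.

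The main obstacle I anticipate is the geometric step: establishing that the image of $\partial\mathcal{M}$ under $\exp_x^{-1}$ is a uniform $C^2$ perturbation of a hyperplane, together with the uniqueness of the nearest boundary point, for all sufficiently small $h$. This requires a quantitative tubular-neighborhood argument that trades off the injectivity radius against the second fundamental form of $\partial\mathcal{M}$ under the bounded-curvature assumption; everything else (Taylor expansion, symmetry cancellation, variance bookkeeping) is routine once this geometric reduction is in place.
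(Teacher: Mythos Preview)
Your overall strategy matches the paper's proof almost exactly: localize via exponential decay of $K$ (the paper packages this as a separate lemma), pass to geodesic normal coordinates $s=\exp_x^{-1}(y)$, expand $dV$, $\|x-\exp_x(s)\|$, and $f(\exp_x(s))$ to the needed order, decompose $s=s_{\perp}\oplus s_{\parallel}\eta_x$, use the odd symmetry in $s_{\perp}$ to kill the transverse gradient term, and read off $m_0^{\partial}$ and $m_1^{\partial}$ as the surviving half-line integrals; the variance is then the usual i.i.d.\ second-moment computation. In fact you are \emph{more} explicit than the paper about the geometric step (replacing the true pulled-back domain $\Omega_h$ by the half-space $H_h$): the paper simply asserts that in exponential coordinates ``vectors parallel to $\eta_x$ \ldots\ only extend up to length $b_x$'' and integrates over the exact slab $[-h^{\gamma},h^{\gamma}]^{m-1}\times[-h^{\gamma},b_x]$ without quantifying the error from the curvature of $\partial\mathcal{M}$.

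One point to revisit: your claim that the $\mathcal{O}(h)$ transverse deviation of $\exp_x^{-1}(\partial\mathcal{M})$ from the hyperplane $\{u\cdot\eta_x=b_x/h\}$ produces only an $\mathcal{O}(h^2)$ error in the integral looks optimistic. The symmetric difference between $\Omega_h$ and $H_h$ has thickness of order $h\|u_{\perp}\|^2$ (times the second fundamental form of $\partial\mathcal{M}$) in the $u_{\parallel}$ direction, so integrating $K(\|u\|)f(x)$ over it contributes a term of order $h\,f(x)$ multiplied by a boundary-curvature constant, not $h^2$. The paper does not confront this either---it silently treats the pulled-back boundary as exactly flat---so your reduction is no less rigorous than the original, but if you want the stated $\mathcal{O}(h^2)$ remainder to be literally correct you will need either an additional cancellation argument or to acknowledge an extra $\mathcal{O}(h)$ term proportional to $f(x)$ and the curvature of $\partial\mathcal{M}$.
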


The proof of Theorem \ref{thm2} is in Appendix \ref{boundaryApp}.  Intuitively, Theorem \ref{thm2} says that finding a consistent estimator of $f(x)$ for points near the boundary requires correcting the zeroth moment $m_0$.  For interior points, the zeroth moment is the integral of the kernel over the entire tangent space, but for boundary points, the integral only extends to the boundary.  Since we choose an orientation with $\eta_x$ pointing towards the boundary (for boundary points $\eta_x$ is the unit normal vector), the integral over $z_{\parallel}$ extends infinitely in the negative direction (into the interior of the manifold) but only up to $\dM_x/h$ in the positive direction (toward the boundary).  One should think of $h z_{\parallel}\eta_x$ being a tangent vector that extends up to $\dM_x$, which explains why $z_{\parallel}$ extends to $\dM_x/h$.  Finally, notice that for $dM(x) \gg h$, the zeroth moment $m^{\partial}_0(x)$ reduces to the zeroth moment $m_0(x)$ for manifolds without boundary up to an error of higher order in $h$ due to the decay of the kernel.  This shows that the estimator (\ref{KDEb}) of Theorem \ref{cor2} is consistent for all interior points.  However, for a fixed $h$, the bias will be significantly larger using the estimator of (\ref{standardKDE}) than for the estimator of Theorem \ref{thm2} for points with $\dM_x \leq h$.

The applicability of \eqref{KDEb} depends on an efficient calculation of $m^{\partial}_0(x)$. For general local kernels, the formula for $m^{\partial}_0(x)$ can be very difficult to evaluate near the boundary.  A possible solution is to apply an asymptotic expansion in $\dM_x/h$, for example,
\begin{align} m^{\partial}_0(x) &= \int_{\mathbb{R}^{m-1}}\int_{-\infty}^{0} K(||z_{\perp}|| + |z_{\parallel}|) \, dz_{\parallel} dz_{\perp} + \frac{\dM_x}{h}  \int_{\mathbb{R}^{m-1}} K(||z_{\perp}||) \, dz_{\perp} + \mathcal{O}\left(\left(\frac{\dM_x}{h}\right)^2\right).  \nonumber
\end{align}
However, working with these asymptotic expansions is very complicated.  Moreover, the asymptotic expansion suggests a fundamental connection between $m^{\partial}_0(x)$ and the standard zeroth moment $m_0(x)$ for an $(m-1)$-dimensional manifold.  Exploiting this connection requires a kernel which can convert the sum $h ||z_{\perp}|| + h |z_{\parallel}|$ into a product.  Of course, the only kernel which can make this separation exactly is the exponential kernel,
\begin{equation} \label{expker} K(z) = \pi^{-m/2}\exp\left(-||z||^2 \right) \end{equation}
where we have,
\[ K(z_{\perp} + z_{\parallel}\eta_x) = \pi^{-(m-1)/2}\exp\left(-||z_{\perp}||^2\right) \pi^{-1/2}\exp\left(-z_{\parallel}^2\right). \]
This property dramatically simplifies KDE for manifolds with boundary, as shown by the following explicit computation,
\begin{align}\label{gaussianm} m^{\partial}_0(x) &= \pi^{-(m-1)/2} \int_{\mathbb{R}^{m-1}} \exp\left(-||z_{\perp}||^2\right) \, dz_{\perp} \pi^{-1/2} \int_{-\infty}^{\dM_x/h} \exp\left(-z_{\parallel}^2\right) \, dz_{\parallel}  \nonumber \\
& = \pi^{-1/2} \int_{-\infty}^{0} \exp\left(-z_{\parallel}^2\right) \, dz_{\parallel} + \pi^{-1/2} \int_{0}^{\dM_x/h} \exp\left(-z_{\parallel}^2\right) \, dz_{\parallel}  \nonumber \\
&= \frac{1}{2}(1 + \textup{erf}(\dM_x/h))   
\end{align}
Due to this simplification, we advocate the exponential kernel for KDE on manifolds with boundary.

Making use of Theorem \ref{thm2} with $m^{\partial}_0(x)$ from \eqref{gaussianm} reduces the problem to estimating the distance $b_x$ to the boundary. 
The next theorem calculates the expectation of the BDE estimator \eqref{bde}
\[ \mu_{h,N}(x) \equiv \frac{1}{N h^{m+1}}\sum_{i=1}^N K\left(\frac{||x-X_i ||}{h}\right)(X_i-x). \]
Together with Theorem \ref{thm2}, the BDE estimator will be used to estimate the distance $b_x$.

\begin{thm}[Boundary Direction Estimation]\label{thm3} Under the same hypotheses as Theorem \ref{thm2}, $\mu_{h,N}(x)$ has expectation
\[ \mathbb{E}[\mu_{h,N}(x)] = -\eta_x f(x) m_1^{\partial}(x) + \mathcal{O}(h \nabla f(x), h f(x)) \]
where $\eta_x \in T_x\mathcal{M}$ is a unit vector pointing towards the closest boundary point (for $x\in \partial\mathcal{M}$, $\eta_x$ is the outward pointing normal). 
\end{thm}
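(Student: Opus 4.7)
The plan is to reduce the expectation to a single-sample integral using the i.i.d.\ assumption, then reuse the exponential-map change of variables from the proof of Theorem \ref{thm2}. By linearity of expectation,
\[ \mathbb{E}[\mu_{h,N}(x)] \;=\; \frac{1}{h^{m+1}} \int_{\mathcal{M}} K\!\left(\frac{||x-y||}{h}\right)(y-x)\, f(y)\, dV(y), \]
and I would pull this back to $T_x\mathcal{M}$ via $y=\exp_x(u)$, rescaled by $u=hz$. The resulting domain in $\mathbb{R}^m$ is, to leading order, the half-space $H=\{z : z\cdot\eta_x\le \dM_x/h\}$, with $O(h)$ deviations coming from the second fundamental form of $\partial\mathcal{M}\subset\mathcal{M}$.

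The local expansions needed are exactly those already established for Theorem \ref{thm2}: (i) by tangibility, the ambient distance equals the geodesic distance up to $O(h^3||z||^3)$ on this scale; (ii) $y-x = hz + O(h^2||z||^2)$ in ambient coordinates, the correction involving the extrinsic second fundamental form of $\mathcal{M}\subset\mathbb{R}^n$; (iii) the volume element pulls back to $(1+O(h^2||z||^2))\,dz$; and (iv) $f(y)=f(x)+h\,\nabla f(x)\cdot z + O(h^2||z||^2)$. Substituting, the combined factor $h^{-m-1}\cdot h^m \cdot h = 1$ (from the change of variables and the $y-x$ expansion) yields
\[ \mathbb{E}[\mu_{h,N}(x)] \;=\; f(x) \int_H z\, K(||z||)\, dz \;+\; h\int_H z\,(\nabla f(x)\cdot z)\, K(||z||)\, dz \;+\; (\text{curvature terms}). \]

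To evaluate the leading integral, decompose $z=z_\perp+z_\parallel\eta_x$ with $z_\perp\perp\eta_x$. Rotational symmetry of $K(||z||)$ in $z_\perp$ kills the transverse component of $z$, leaving
\[ \int_H z\, K(||z||)\, dz \;=\; \eta_x \int_{\mathbb{R}^{m-1}} \int_{-\infty}^{\dM_x/h} z_\parallel\, K(||z||)\, dz_\parallel\, dz_\perp \;=\; -\,\eta_x\, m_1^\partial(x), \]
by the definition of $m_1^\partial$ from Theorem \ref{thm2}; this gives the claimed leading term $-\eta_x f(x) m_1^\partial(x)$. The next-order piece is a bounded second moment of $K$ over $H$ contracted with $\nabla f(x)$, hence $O(h\,\nabla f(x))$, and the remaining volume and curvature corrections contribute $O(h\,f(x))$, matching the stated error.

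The main obstacle I expect is the careful bookkeeping of the boundary geometry after the pullback. The preimage $\exp_x^{-1}(\mathcal{M})$ is not literally a half-space: its boundary $\exp_x^{-1}(\partial\mathcal{M})$ agrees with the hyperplane $\{z\cdot\eta_x=\dM_x/h\}$ only to leading order, with $O(h)$ corrections driven by the second fundamental form of $\partial\mathcal{M}$ inside $\mathcal{M}$. Showing that these deformations of the integration region contribute only at the stated $O(h\,f(x))$ level, and in particular do not introduce any leading-order component transverse to $\eta_x$ that would spoil the direction of $\mu_{h,N}$, is the delicate part. Fortunately this is exactly the same bookkeeping that underlies Theorem \ref{thm2}, so the BDE expectation can be obtained by inserting the extra vector factor $y-x$ into that argument and tracking one additional first moment.
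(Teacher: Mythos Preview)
Your proposal is correct and follows essentially the same route as the paper: reduce to a single-sample integral, pull back via the exponential map and rescale $s=hz$, invoke the asymptotic expansions already established in the proof of Theorem~\ref{thm2}, and use symmetry in the $z_\perp$ directions to isolate the $\eta_x$-component as $-m_1^\partial(x)$. The paper carries out exactly this computation, writing the curvature correction explicitly as the $K'(||s||/h)\,\text{II}(s,s)/2$ term before absorbing it into $\mathcal{O}(hf(x))$; your treatment of the boundary-curvature bookkeeping is, if anything, slightly more explicit than the paper's.
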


The proof of Theorem \ref{thm3} is in Appendix \ref{boundaryApp}. Notice that the minus sign in the definition of $m_1^{\partial}(x)$ implies that for most kernels, $m_1^{\partial}(x) > 0$ (since the integral is heavily weighted toward the negative $z_{\parallel}$ direction).  This choice of minus sign gives the correct impression that $\mu_{h,N}(x)$ points into the interior (the opposite direction of $\eta_x$).  

Intuitively, Theorem \ref{thm3} follows from the fact that the integrand $K(||z||)z$ of the BDE estimator is odd, and the domain of integration is symmetric in every direction $z \perp \eta_x$.  The only non-symmetric direction is parallel to $\eta_x$ due to the boundary.  Thus, the integral is zero in every direction except $-\eta_x$, where the minus sign follows from the fact that there are more points in the interior direction than in the boundary direction (since the boundary cuts off the data).  Of course, it is possible for a large density gradient to force $\mu$ to point in a different direction, which explains the bias term of order $h\nabla f(x)$, but this is a higher order error.  

For the Gaussian kernel \eqref{expker}, we again have an exact expression for the integral
\begin{align}\label{gaussianmu}\mathbb{E}[\mu_{h,N}(x)] &= \eta_x f(x) \pi^{-1/2} \int_{-\infty}^{\dM_x/h} \exp\left(-z_{\parallel}^2\right) z_{\parallel} \, dz_{\parallel} = -\eta_x \frac{f(x)}{2\sqrt{\pi}} \exp\left(-\frac{\dM_x^2}{h^2}\right)
\end{align}
and we will now use this expression combined with \eqref{gaussianm} to find $\dM_x$.  Since $f(x)$ is unknown, and appears in both $f_{h,N}(x)$ and $||\mu_{h,N}(x)||$, the natural quantity to consider is
\begin{align}
\frac{f_{h,N}(x)}{\sqrt{\pi}||\mu_{h,N}(x)||} = \left(1+\textup{erf}\left( \dM_x/h \right) \right)e^{\dM_x^2/h^2}. \nonumber
\end{align} 
In order to find $\dM_x$ we will solve the above expression numerically by setting $c =  \frac{f_{h,N}(x)}{\sqrt{\pi}||\mu_{h,N}(x)||}$ and defining
\[ F(\dM_x) = \left(1+\textup{erf}\left( \dM_x/h \right) \right)e^{\dM_x^2/h^2} - c, \]
where we note that
\[ F'(\dM_x) =\frac{2}{\sqrt{\pi}h} + 2\left(1+\textup{erf}\left( \dM_x/h \right) \right)e^{\dM_x^2/h^2} \frac{\dM_x}{h^2},   \]
Newton's method can be used to solve $F(\dM_x) = 0$ for $\dM_x$.  In fact, using the fact that $1 \leq 1+\textup{erf}(\dM_x/h) < 2$, a very simple lower bound for $\dM_x$ is
\[ \dM_x \geq h \sqrt{\max\{0,-\log(c/2)\}} \]
and this can be a useful initial guess for Newton's method.

Finally, using the estimated value for $\dM_x$ we can evaluate $m_0(x) = \frac{1}{2}\left(1+ \textup{erf}\left(\dM_x/h\right)\right)$ and use the KDE formula in Theorem \ref{thm2} with this $m_0(x)$, which yields a consistent estimator of $f_{h,N}(x)$ on manifolds with boundary.  

\begin{examp}[KDE on a Disc]\label{example1}  \rm In this example we verify the above expansions for data sampled on the disk $D^2 = \{(r\cos\theta,r\sin\theta)\in\mathbb{R}^2 : r\leq 1\}$ according to the density $f(r,\theta) = \frac{2}{3\pi}(2-r^2)$.  In order to generate samples $x_i = (r_i,\theta_i)$ from the density $f$, we use the rejection sampling method.  We first generate points on the disc sampled according to the uniform density $f_0(r,\theta) = \textup{vol}(D^2)^{-1} = \pi^{-1}$ by generating 12500 uniformly random points in $[-1,1]^2$ and then eliminating points with distance to the origin greater than 1.  Next we set $M = \max_{r,\theta} \{ f(r,\theta)/f_0(r,\theta) \} = 4/3$ and for each uniformly sampled point $\tilde x_i$ on $D^2$, we draw a uniformly random variable $\xi_i \in [0,1]$ and we reject the $i$-th point if $\xi_i \geq \frac{f(\tilde x_i)}{M f_0(\tilde x_i)} = 1-r^2/2$ and otherwise we accept the point as a sample $x_i$ of $f$.  In this experiment there were $N=7316$ points remaining after restricting to the unit disc and rejection sampling.

 \begin{figure}[h]
\begin{center}
\includegraphics[width=.49\linewidth]{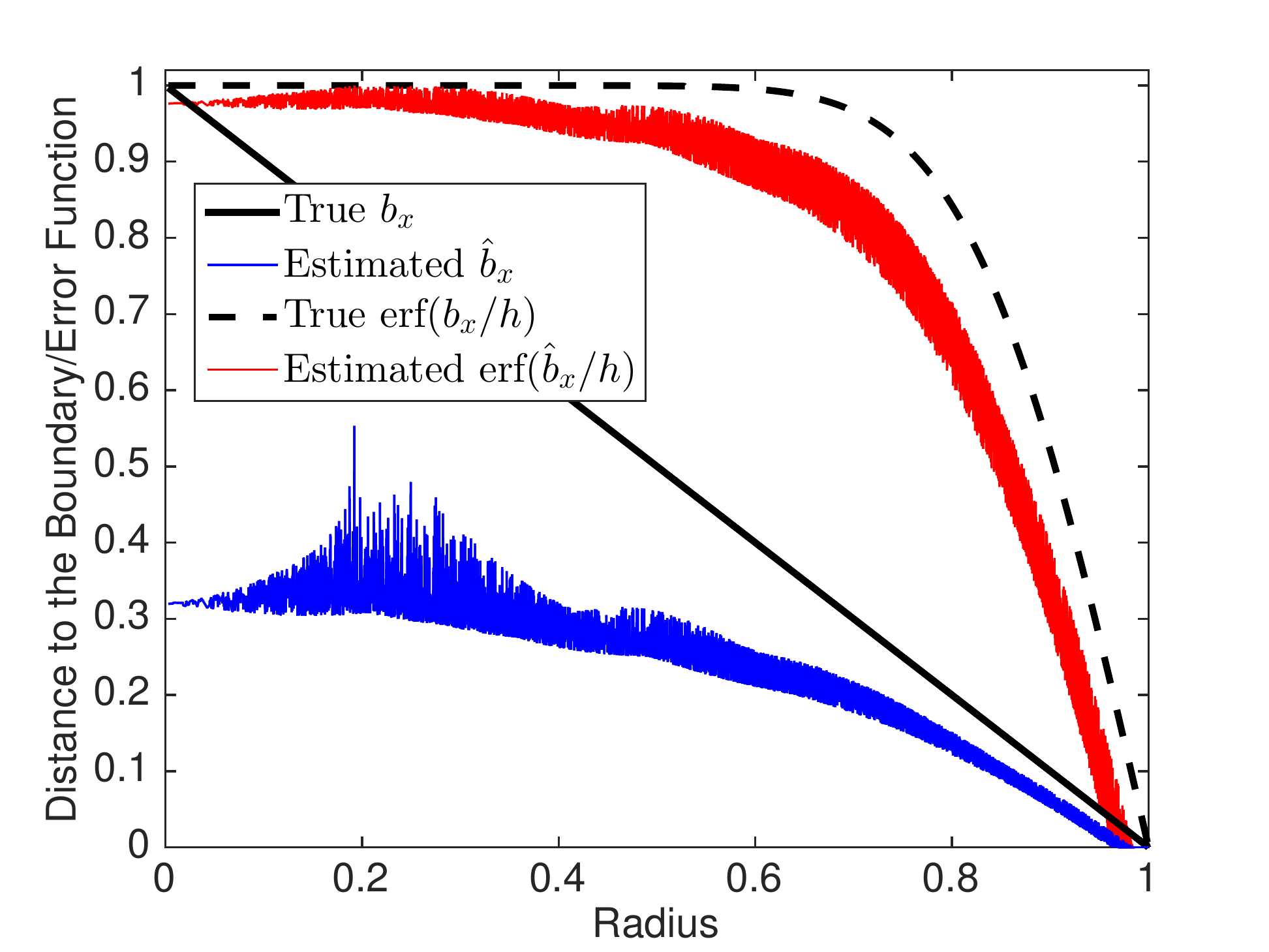}
\includegraphics[width=.49\linewidth]{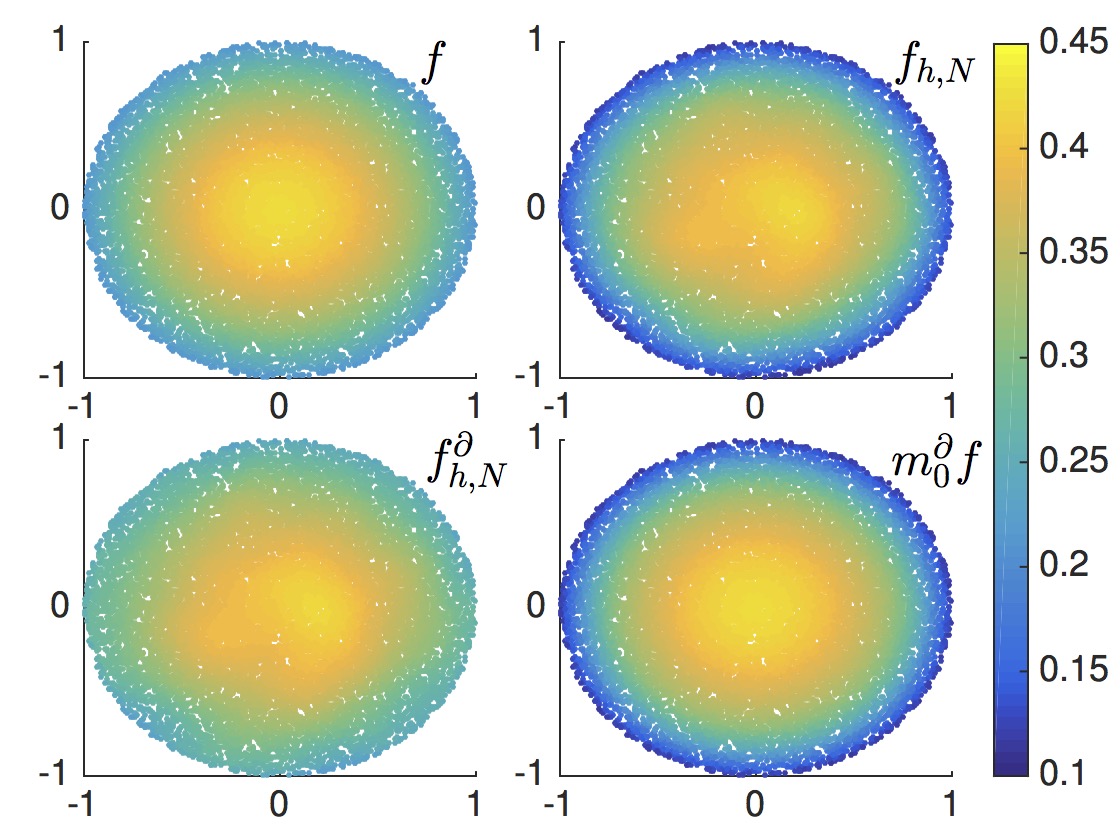}
\end{center}
\caption{\label{ex1} Verifying the estimation of the distance to the boundary with $h=0.2$ on the disk data set.  Left: The true distance to the boundary (black, solid curve) is compared to the recovered distance (blue) is shown for each point as a function of the radius, we also show the value of $\textup{erf}(\dM_x/h)$ for both the true distance (black, dashed curve) and the recovered distance (red). Right: The true density $f$ is compared to the standard KDE $f_{h,N}$ and the boundary correction $f_{h,N}^{\partial}$ as well as the theoretical standard KDE result $m_0^{\partial} f$.}
\end{figure}

Using the data $x_i$, we first evaluate the standard KDE formula (without boundary correction)
\[ f_{h,N}(x_i) = \frac{1}{N h^2} \sum_{j=1}^N K\left(\frac{||x_i-x_j ||}{h}\right) \] 
and
\[ \mu_{h,N}(x_i) =  \frac{1}{N h^3} \sum_{j=1}^N K\left(\frac{||x_i-x_j ||}{h}\right)(x_j-x_i) \] 
on each data point.  In this example we use the standard Gaussian kernel described above.  In order to correct the KDE on the boundary, we first estimate the distance to the boundary using the strategy outlined above, and the results of this estimate are shown in Figure \ref{ex1} (top, left).  We then compute $m^{\partial}_0(x)$ which allows us to compute the boundary correction $f^{\partial}_{h,N}(x)$ and in Figure \ref{ex1} (top, right) we compare this to the standard KDE $f_{h,N}(x)$ as well as the true density $f$ and the theoretically derived large data limit $m^{\partial}_0(x)f(x)$ of the standard KDE.  These quantities are also compared qualitatively on the disc in Figure \ref{ex1}, which clearly shows the underestimation of the standard KDE on the boundary, which also agrees with the theoretically derived standard KDE result which is $m^{\partial}_0(x)f(x)$.  In contrast, the boundary correction $f_{h,N}^{\partial}$ slightly overestimates the true density on the boundary, due to $h = 0.2$ being quite large in this example.

\end{examp}

\subsection{The Cut and Normalize Method}\label{cutnormalize}

The weakness of the previous approach is that the estimate of $\dM_x$ may not be very accurate, especially for points far from the boundary.  Of course, since the function $\textup{erf}(\dM_x/h)$ saturates for $\dM_x$ sufficiently large, this somewhat ameliorates the problem of underestimating $\dM_x$.  However, it would be preferable in terms of bias to have an exact value for $\dM_x$.  In fact, the kernel weighted average $\mu_{h,N}(x)$ makes this possible.  Notice that the unit vector in the direction of $-\mu_{h,N}(x)$ is an estimate of $\eta_x$, namely
\[ \hat{\eta}_x \equiv \frac{-\mu_{h,N}(x)}{||\mu_{h,N}(x)||} = \eta_x + \mathcal{O}(h). \]
Since $\mu_{h,N}(x)$ tells us the direction of the boundary, we can protect against underestimation of $\dM_x$ by actually cutting of the kernel at the estimated distance to the boundary.  

Given an estimate $\hat{\dM}_x$ of $\dM_x$, the cut-and-normalize method only includes samples $X_i$ such that $X_i \cdot \hat\eta_x \leq \hat{\dM}_x$, which gives us the following estimator,
\begin{equation} \label{KDEcut} f^{c}_{h,N}(x) \equiv \frac{1}{N (1+\textup{erf}(\hat{\dM}_x/h)) h^m}\sum_{X_i \cdot \hat\eta_x \leq \hat{\dM}_x} K\left(\frac{||x-X_i ||}{h}\right) 
\end{equation}
which is a consistent estimator for any $0<\hat{\dM}_x < \dM_x$.  Of course, this cut-and-normalize method has several potential downsides.  The first is that by not including the maximum possible number of points, we have increased the variance of our estimator.  The second is that for points in the interior, the cut-and-normalize method may eliminate the symmetry of the region of integration, leading to increased bias for interior points.  However, as long as the estimate $\dM_x$ is larger than $h$ for points that are far from the boundary, the effect of cutting the domain outside of $h$ will be negligible (see Lemma \ref{lem1} for details).  In our empirical investigations, we have found that the error introduced by the cut-and-normalize method is very small compared to the error of using an incorrect estimate of $\dM_x$ direction in $m^{\partial}_0(x)$.  In Figure \ref{HOdisk} we apply the cut-and-normalize method to Example \ref{example1} and show that for interior points, the method produces results that are comparable to the standard KDE.  This should be compared with Figure \ref{ex1} which simply renormalizes using the estimated distance to the boundary without cutting, and does not match the standard KDE for interior points.

 \begin{figure}[h]
\begin{center}
\includegraphics[width=.51\linewidth]{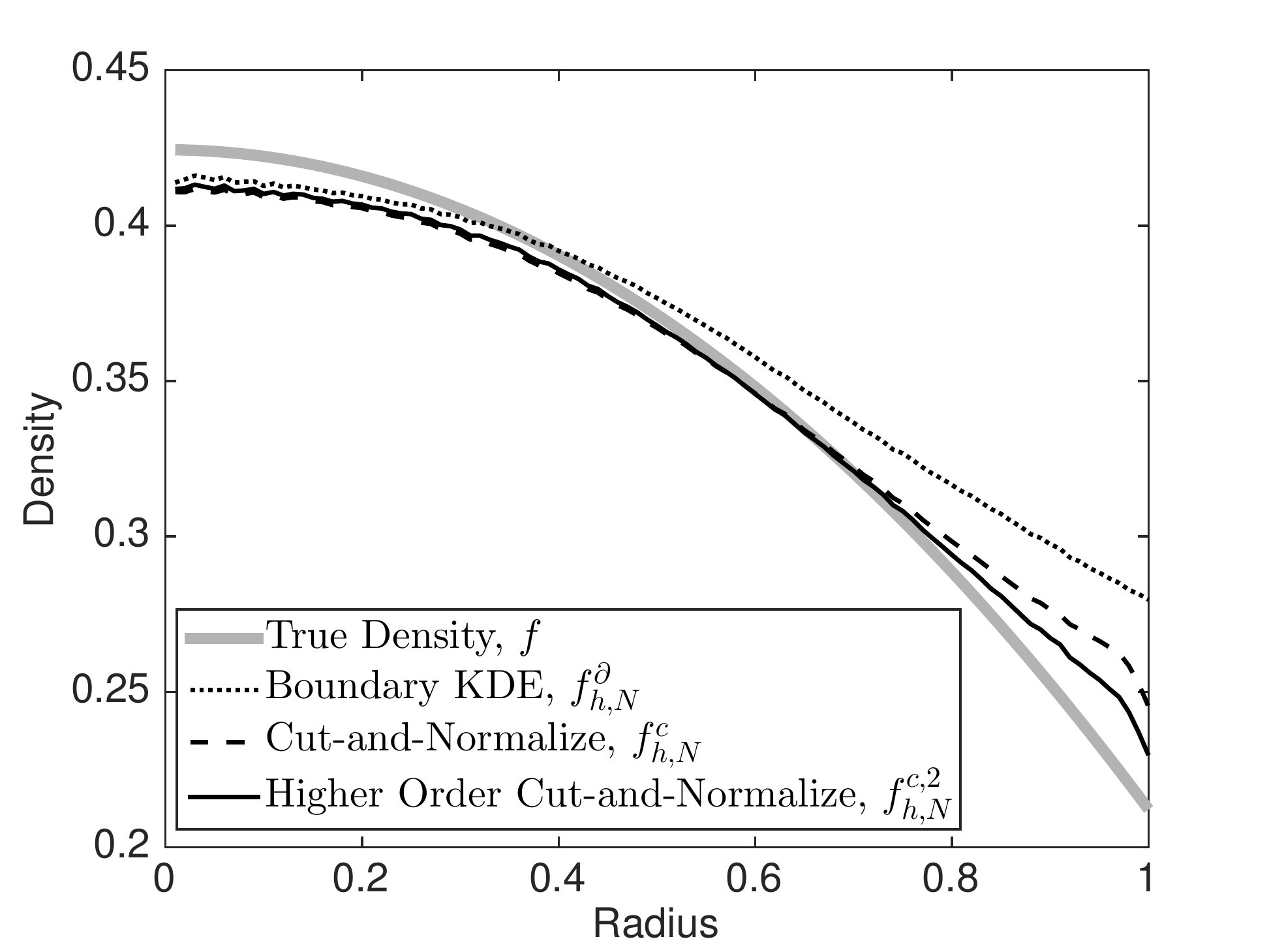}
\includegraphics[width=.48\linewidth]{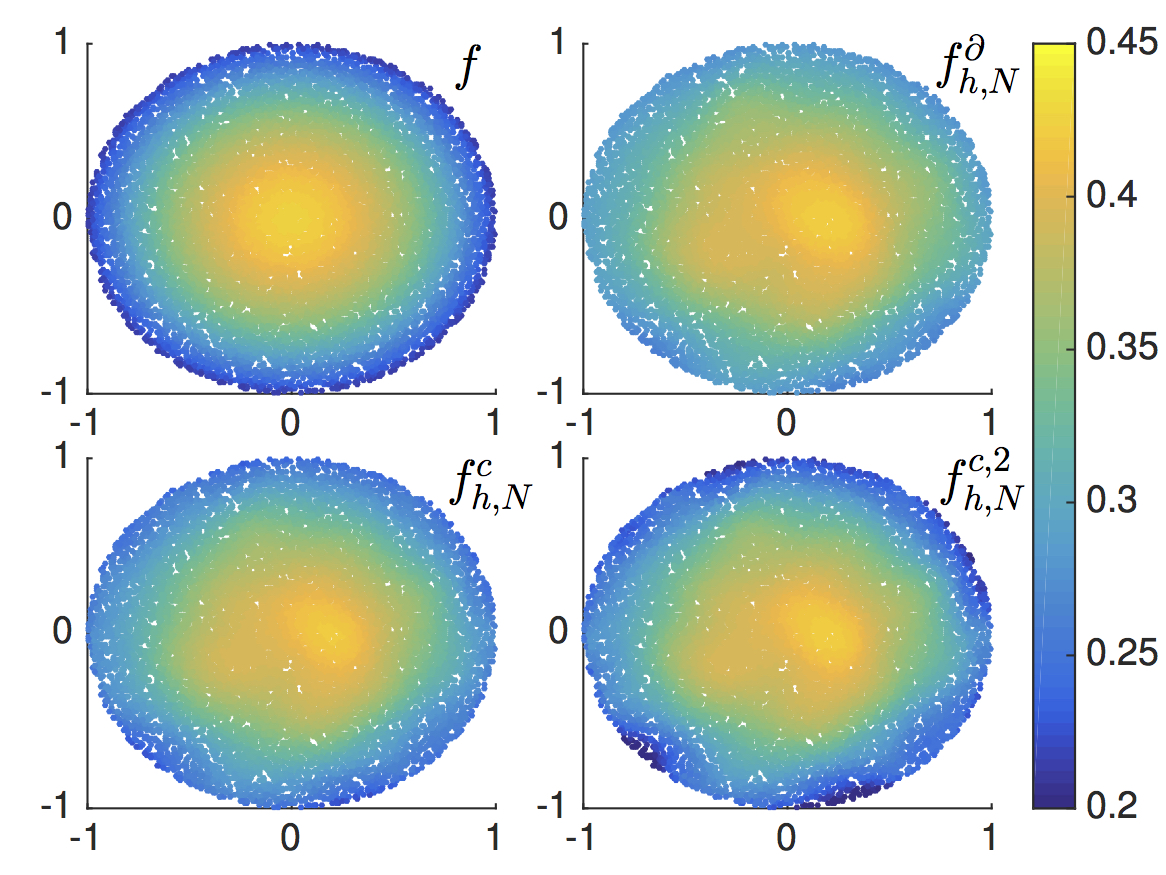}
\end{center}
\caption{\label{HOdisk} Comparison of the boundary correction, cut-and-normalize method, and higher order cut-and-normalize method on the disk of Example \ref{example1}.  Left: Average value of each density estimation method as a function of radius after repeating the experiment of Example \ref{example1} 10 times with independent data sets which shows the bias (variance error is averaged out).  Notice that the higher order cut-and-normalize method $f_{h,N}^{c,2}$ has similar bias on the boundary and the interior, which is order $h^2=0.04$ in each case.  Right: A single realization of the experiment in Example \ref{example1} showing the true density and all three density estimates (note that the color scale is different than in Figure \ref{ex1} to better show the differences in these estimates).}
\end{figure}

\subsection{Higher-Order Boundary Correction}\label{extrap}

The above method obtains an asymptotically unbiased estimate of the sampling density at all points of the manifold, including the boundary.  However, the bias in the interior of the manifold is $\mathcal{O}(h^2)$, which is significantly smaller than for points very near the boundary, where the bias is $\mathcal{O}(h)$.  In order to obtain a uniform rate of convergence at all points, we need to eliminate the order-$h$ term $h m_1^{\partial}(x)\eta_x \cdot \nabla f(x)$ appearing in the bias of Theorem \ref{thm2}.  

To construct a higher-order kernel we will use Richardson extrapolation, which is a general method of combining estimates from multiple values of $h$ to form a higher order method. Its use is common in the kernel density estimation literature \cite{schucany1977improvement,boundary96,kyung1998nonparametric}.  Our goal is to cancel the bias term \eqref{kdebias}
\[ h m^{\partial}_1(x) \eta_x \cdot \nabla f(x) = h (1+\textup{erf}(\dM_x/h))e^{-\dM_x^2/h^2} \eta_x \cdot \nabla f(x) \] 
using a linear combination of two KDE formulas with different values of $h$.  Consider the bias for bandwidths $h$ and $2h$:
\begin{equation}\mathbb{E}[f^c_{h,N}(x)] = f(x) +  h (1+\textup{erf}(\dM_x/h))e^{-\dM_x^2/h^2} \eta_x \cdot \nabla f(x) + \mathcal{O}(h^2) 
\end{equation}
\begin{equation} \mathbb{E}[f^c_{2h,N}(x)] = f(x) +  2 h (1+\textup{erf}(\dM_x/(2h)))e^{-\dM_x^2/(4h^2)} \eta_x \cdot \nabla f(x) + \mathcal{O}(h^2).
 \end{equation}
Set ${\displaystyle C=\frac{(1+\textup{erf}(\dM_x/(2h)))e^{-\dM_x^2/(4h^2)}}{(1+\textup{erf}(\dM_x/h))e^{-\dM_x^2/(h^2)}}}$ and define the second-order cut-and-normalize density estimator as
\begin{equation} \label{rich} f_{h,N}^{c,2}(x) \equiv \frac{2Cf^c_{h,N}(x)-f^c_{2h,N}(x)}{2C-1}.
\end{equation}
The order-$h$ term of the bias cancels, so that
\[ \mathbb{E}[f_{h,N}^{c,2}(x)] = f(x) + \mathcal{O}(h^2), \]
which is the same asymptotic bias as the standard KDE in Corollary \ref{cor2} for embedded manifolds without boundary.  It is also interesting to note that as $\dM_x$ becomes larger than $h$, the higher-order formula reduces to $f^c_{2h,N}$.  This shows that this kernel is only ``higher-order" on the boundary, and in fact is the same order as the standard KDE on the interior, so in fact $f_{h,N}^{c,2}(x)$ has a bias which is order-$h^2$ on the boundary and the interior.  

The higher order cut-and-normalize method KDE is implemented in the examples below and show bias that is significantly reduced compared to the naive cut-and-normalize method.  Figure \ref{workflow} summarizes the complete algorithm. At a given point $x$ and for a given bandwidth $h$ and number of data points $N$, we compute the KDE \eqref{standardKDE} and BDE \eqref{bde} to estimate $f_{h,N}$ and $\mu_{h,N}$, respectively. These estimators combine to produce the estimates $\hat{b}_x$ and $\hat{\eta}_x$ for the distance and direction to the boundary, respectively. Finally, we compute the cut-and-normalize estimator \eqref{KDEcut} for $h$ and $2h$ and extrapolate the estimates using \eqref{rich} to arrive at the final second-order estimate $f_{h,N}^{c,2}(x)$ throughout the manifold with boundary.

We first consider an example on a noncompact manifold with boundary, namely a Gaussian distribution restricted to a half-plane.  The manifold in this case is the entire half-plane, which is a simple linear manifold with infinite injectivity radius (see note in Appendix \ref{boundaryApp}) and $R(x,y) = 1$ for all pairs of points.  This means that the half-plane is a uniformly tangible manifold and so we can estimate the density effectively at each point of a sample set.  

\begin{examp}[Gaussian in the Half-Plane] \rm

We generated 20000 points from a standard 2-dimensional Gaussian and then rejected all the points with first coordinate less than zero.  Setting $h=\sqrt{0.06}$, the standard KDE formula $f_{h,N}$ and the BDE $\mu_{h,N}$ were computed. Then the cut-and-normalize estimator $f_{h,N}^c$ and the second-order cut-and-normalize estimator $f_{h,N}^{c,2}$ were calculated as in the flowchart of Figure \ref{workflow}.  These estimates are compared in Figure \ref{HOhalfGaussian}.  Notice that the standard KDE moves the mode of the distribution into the right half-plane, whereas both cut-and-normalize methods yield a mode very close to zero. Of course, the input to the algorithm are the data points only; no information about the manifold is assumed known.

 \begin{figure}[h]
\begin{center}
\includegraphics[width=.51\linewidth]{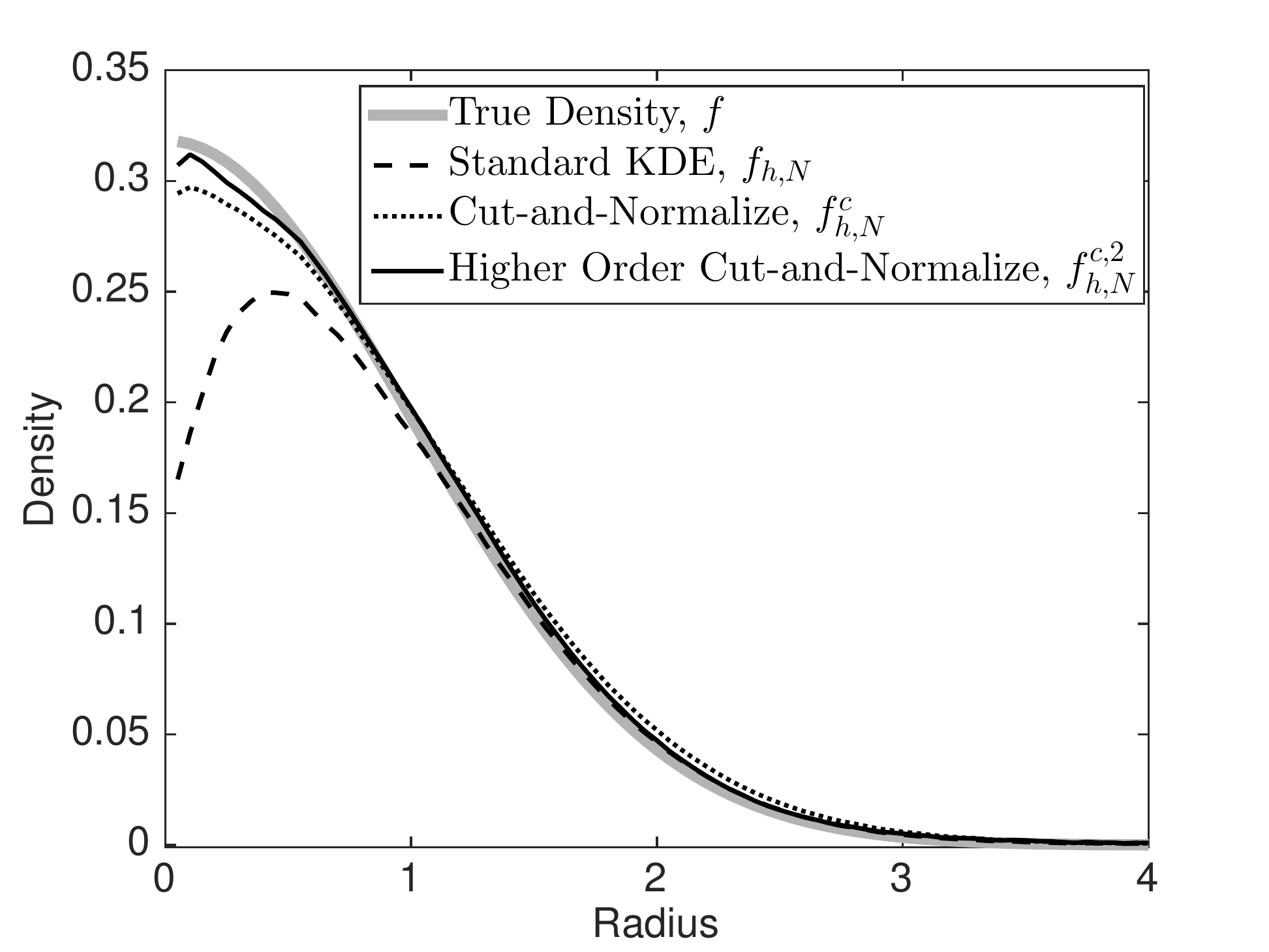}
\includegraphics[width=.48\linewidth]{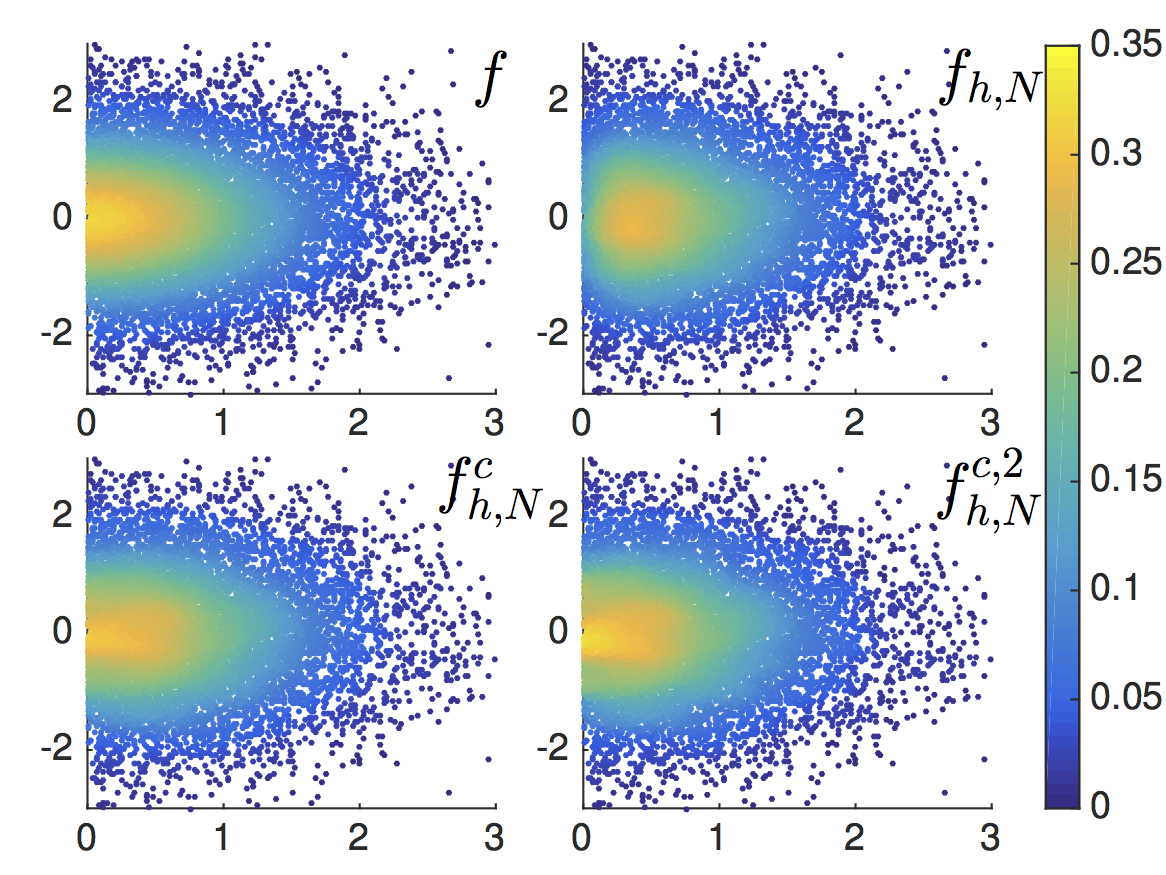}
\end{center}
\caption{\label{HOhalfGaussian} Comparison of the standard KDE, cut-and-normalize method, and higher order cut-and-normalize method on the Gaussian restricted to the half-plane.  Left: Average value of each density estimation method as a function of radius after repeating the experiment 10 times with independent data sets which shows the bias (variance error is averaged out).  Right: A single realization of the experiment showing the true density and all three density estimates.}
\end{figure}

\end{examp}

The next example demonstrates the benefits of the higher-order boundary correction on a portion of a hemisphere with an oscillating boundary (see Figure \ref{HOhemi}).  This manifold is particularly difficult for density estimation due to the large curvature of the boundary.  For a point in the middle of one of the arms, there are two boundaries which are equidistant apart.  Of course, in the limit of very small $h$, these points will not be able to see either boundary, but for large $h$ this can lead to significant bias.  

\begin{examp}[Hemisphere with Oscillating Boundary]\rm

To generate this data set, we began by sampling 50000 points uniformly from $[-1,1]^2$ in the plane, and keep only the points with
\[ r \leq \sin\left(6\left(\theta-\frac{\pi}{12}\right)\right)/8 + \frac{3}{4}, \]
which gives a subset of the disk of radius $7/8$ with an oscillating boundary.  A $z$-coordinate on the unit sphere is assigned to each point by setting $z = \sqrt{1-x^2 + y^2}$.  The volume form is given by  $dV = \textup{det}(D\mathcal{H}^\top D\mathcal{H})^{1/2}$ where $\mathcal{H}:(x,y) \mapsto (x,y,\sqrt{1-x^2-y^2})$ which is $dV = (1-x^2-y^2)^{-1/2}$.  Thus, by mapping uniformly sampled points from the disk onto the hemisphere, the sampling measure of the data at this point is proportional to $dV^{-1} = \sqrt{1-x^2-y^2}$. 

 To normalize the distribution, this function $dV^{-1}$ is integrated against the volume form $dV$, and in polar coordinates $r = \sqrt{x^2+y^2}$ the integral is
\[ \int_0^{2\pi}\int_0^{\sin(6\theta-\pi/2)/8 + 3/4} r dr  d\theta = \frac{73\pi}{128}. \]
The initial density is $q(r) = \frac{128}{73\pi}\sqrt{1-r^2}$. This density is largest in the interior, and the density gradient helps to insure that $\mu_{h,N}$ points in the correct direction (into the interior of the manifold).

 \begin{figure}[h]
\begin{center}
\includegraphics[width=.48\linewidth]{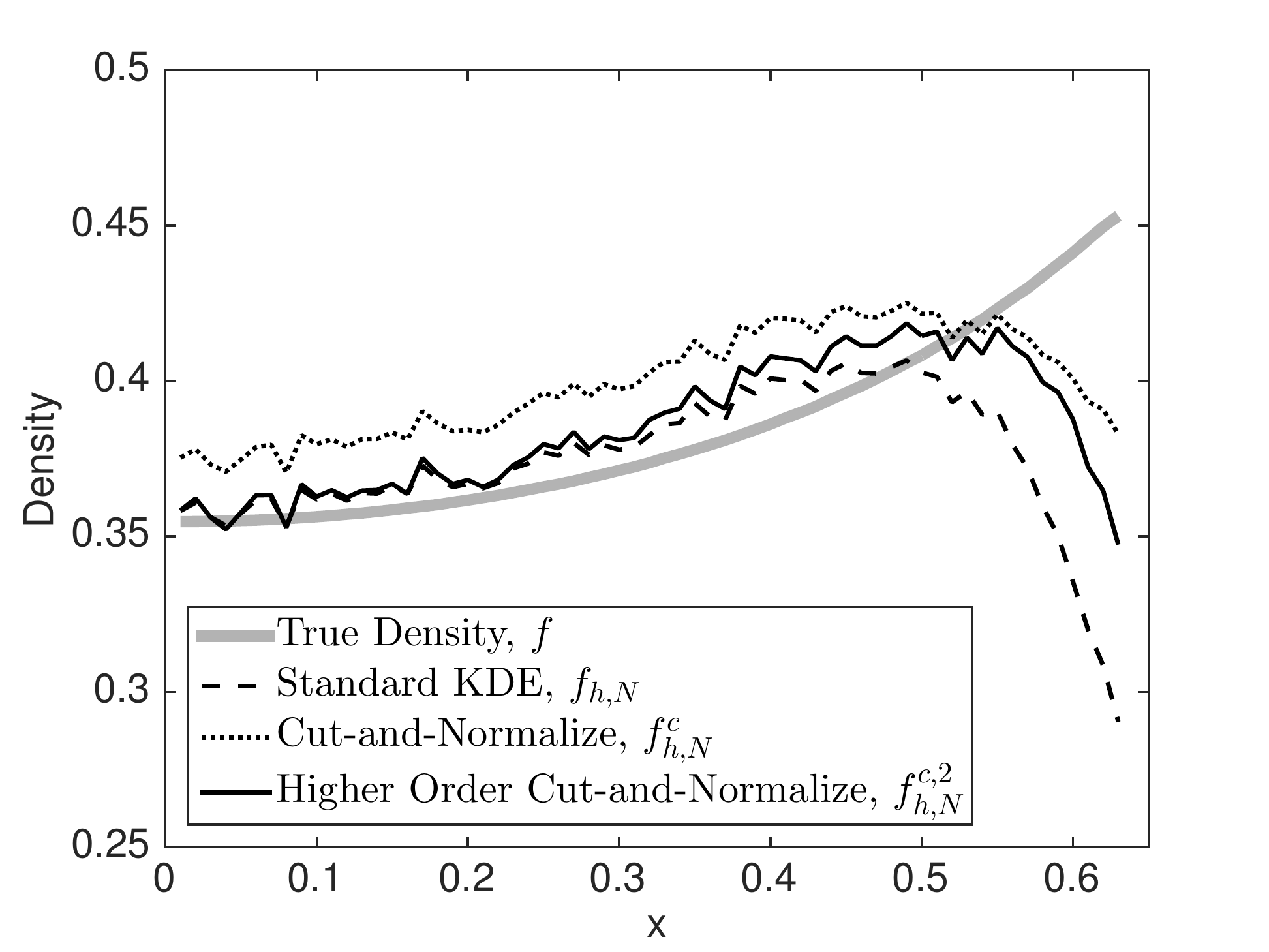}
\includegraphics[width=.48\linewidth]{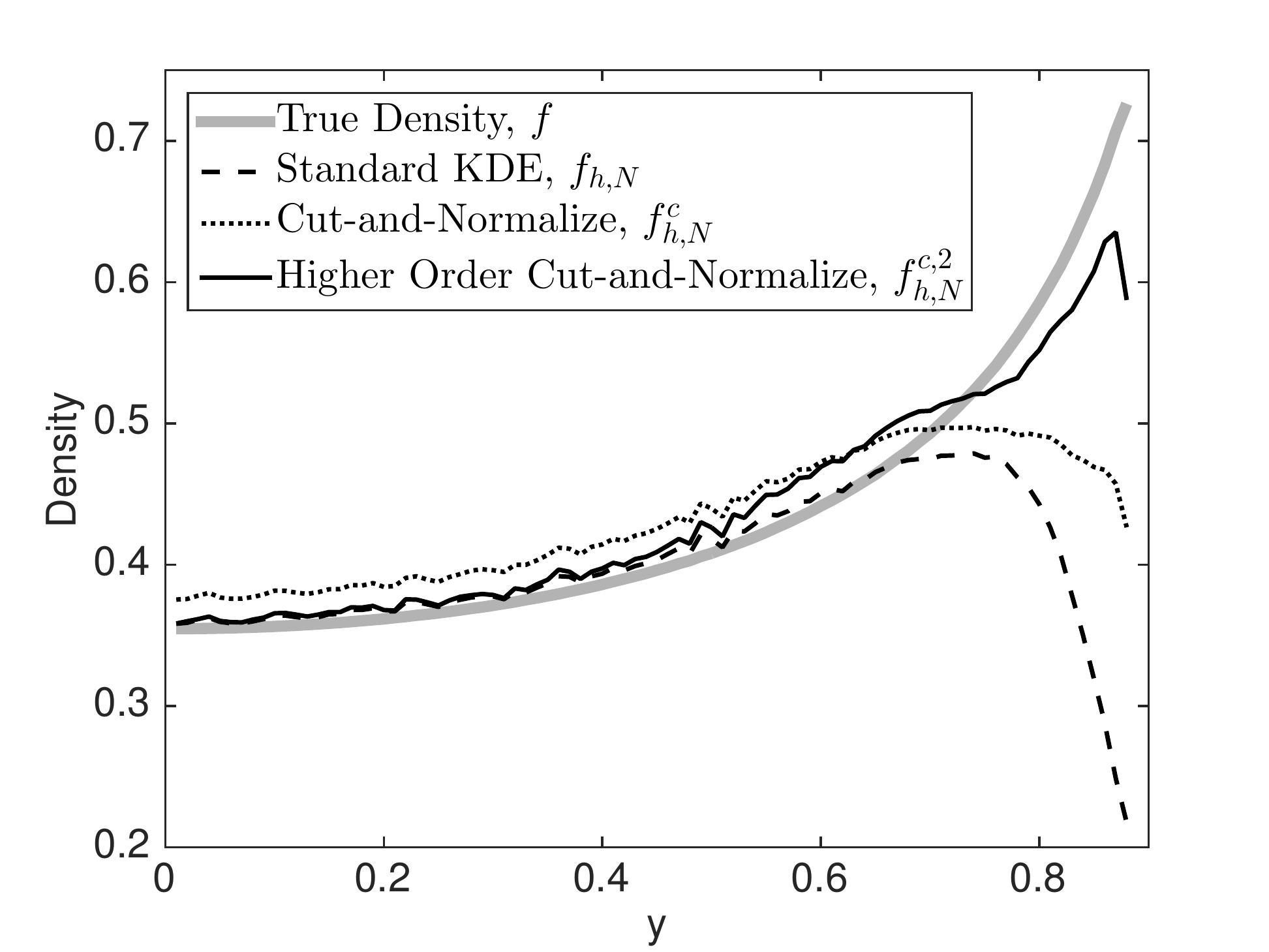}
\includegraphics[width=.96\linewidth]{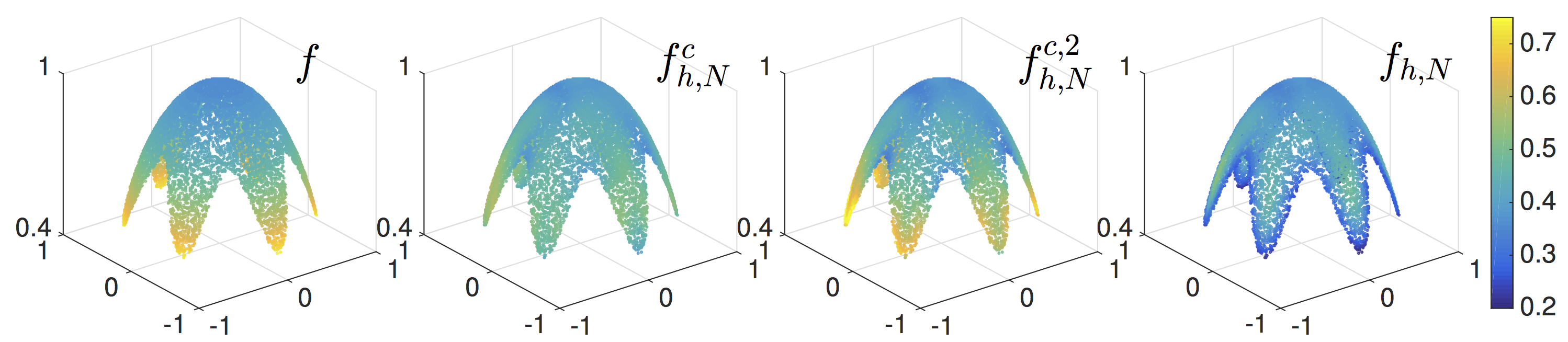}
\includegraphics[width=.98\linewidth]{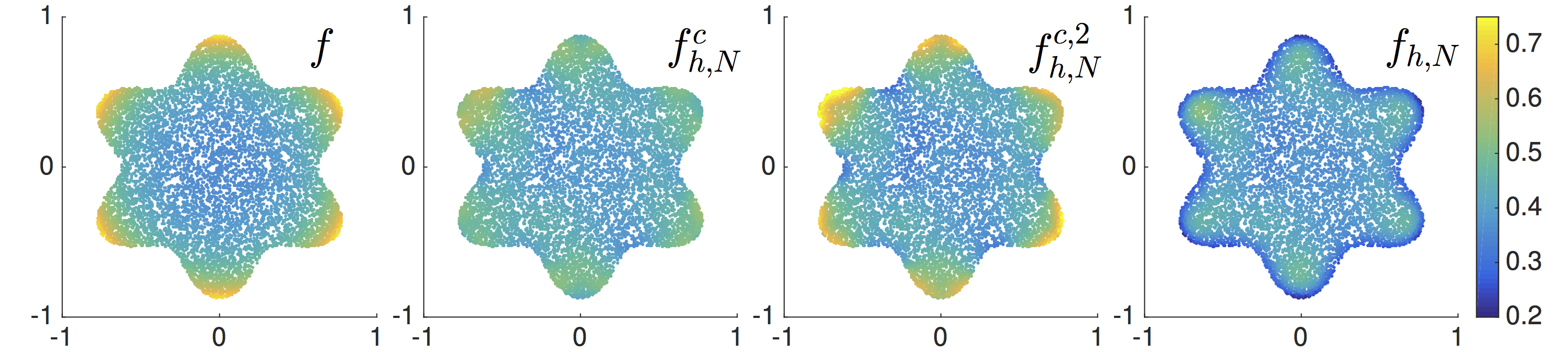}
\end{center}
\caption{\label{HOhemi} Comparison of the standard KDE, cut-and-normalize method, and higher order cut-and-normalize method on the hemisphere with oscillating boundary.  Top, Left: True density compared to estimates on the positive x-axis. Top, Right: True density compared to estimates on the positive y-axis.  Below we visualize the various estimates in 3-dimensions and 2-dimensions.}
\end{figure}

In order to make the problem more challenging we will change the sampling density to be proportional to $f(r) = (1-r^2)^{-1/2}$ which concentrates more density at the boundary.  We will create this sampling density by rejection sampling the initial density.  We first compute the normalization factor of the new density by integrating it against the volume form
\[ \alpha=\int_0^{2\pi}\int_0^{\sin(6\theta-\pi/2)/8 + 3/4} \frac{r}{1-r^2} \, dr  d\theta. \]
The new density will be $f(r) = (1-r^2)^{-1/2}/\alpha$, where $\alpha \approx 2.81893$.  In order to perform rejection sampling, note that the ratio
\[ f(r)/q(r) = \frac{73 \pi}{128\alpha(1-r^2)} \]
has maximum value $M = f(7/8)/q(7/8)$ since $r=7/8$ is the maximum radius on the oscillating boundary.  For each point sampled from $q$ a uniform random number $\xi$ is drawn; the point is accepted as a sample of $f$ if and only if $M\xi < f/q$.  After implementing this process in the  realization shown in Figure \ref{HOhemi},  the remaining  $10422$ points were independent samples of the density $f$ on the hemisphere with oscillating boundary.  Using this data set with $h=\sqrt{0.02}$, we computed the standard KDE for each data point, estimated the distance to the boundary, and computed the cut-and-normalize and higher-order cut-and-normalize estimates of the density.

The density estimates are compared visually in Figure \ref{HOhemi}.  We also repeated this experiment 10 times and computed the average of each of the estimates on the positive x-axis and the positive y-axis (which correspond to the shortest and longest radii, respectively) and these curves are compared to the true density in Figure \ref{HOhemi}.  Despite the gradient of the density increasing in the direction of the boundary, the $\mu_{h,N}$ computation still appears to have pointed into the interior as evidenced by the significant improvement of the cut-and-normalize method over the standard KDE.  This example also showed the largest difference between the cut-and-normalize method and the higher order cut-and-normalize method, possibly due to the large gradient at the boundary making the order $h$ term quite large.  The complexity of the boundary in this example illustrates the advantage of our method, which does not require any prior knowledge of the boundary.

\end{examp}

\section{Discussion}\label{discussion}

The main endeavor of this article is the generalization of the `cut-and-normalize' strategy for boundary correction \cite{gasser1979kernel,kyung1998nonparametric} to manifolds, especially when we cannot assume we know the location of the boundary.  In Section \ref{boundary} we showed that the key to extending the cut-and-normalize strategy was estimating the distance and direction of the boundary and then deriving the correct normalization factor.  Another practical consequence of this theory is that the exponential kernel has a significant advantage over other kernels, which is that the boundary normalization factor has a very simple form independent of the dimension of the manifold.  Although we illustrate a straightforward application of this strategy, it should be noted that using the distance and direction information derived here, the various boundary correction methods of \cite{schuster1985incorporating,karunamuni2008some,boundary93,boundary96,boundary99,kyung1998nonparametric,hall2002new,boundary00,boundary05,boundary14} can now be extended to manifolds and to the case where the boundary is unknown.

The new algorithm that we have described is advantageous when information about the geometric structure of the data is scarce, as is common in machine learning contexts. Blindly assuming that the manifold has no boundary can lead to serious errors near the boundary for conventional kernel density estimators. Our approach yields an algorithm that is equally accurate in the interior and the boundary, and has  performance equivalent to  the conventional approach if there is no boundary.
 Therefore, the only consideration when replacing standard KDE with the new approach is additional computational complexity. 

For simplicity, we have strongly used the ``geometric prior'' that the data lies on a manifold with unknown boundary. However, one could think of other, less constrictive, problems where the ideas could be useful. There is a range of intermediate cases where the data has ``edges'', areas where the density rolls off to zero via an outlier regime that acts as a pro forma boundary. The new algorithm may increase the estimation accuracy in these difficult regions.

\bigskip

{\bf \sf Acknowledgments.}  The research was partially supported by grants CMMI-1300007, DMS-1216568, and DMS-1250936 from the National Science Foundation.

\bibliographystyle{plain}
\bibliography{VBbib}

\appendix

\section{Proofs for Manifolds with Boundary}\label{boundaryApp}

Before proving the theorems in Section \ref{boundary}, we briefly review the assumptions we must make on the manifold $\mathcal{M}$.  Similar conditions were first introduced in \cite{heinThesis}; here we summarize the assumptions with the term \emph{tangible} which is defined for points and manifolds below. 

Recall that the exponential map $\exp_x : T_x\mathcal{M} \to \mathcal{M}$ maps a tangent vector $\vec s$ to $\gamma(||\vec s||)$ where $\gamma$ is the arclength-parametrized geodesic starting at $x$ with initial velocity $\gamma'(0) = \vec s/||\vec s||$.  The {\it injectivity radius}  $\textup{inj}(x)$ of a point $x$ is the maximum radius for which a ball in $T_x\mathcal{M}$ is mapped diffeomorphically into $\mathcal{M}$ by $\exp_x$.  In order to convert integrals over the entire manifold into integrals over the tangent space, we will use the exponential decay of the kernel to localize the integral and then change variables using the exponential map.  This requires that for a sufficiently small localization region (meaning $h$ sufficiently small) the exponential map is a diffeomorphism.  Therefore, the first requirement for kernel density estimation will be that the injectivity radius is non-zero.  

The second requirement is that the ratio
\[ R(x,y) = \frac{||x-y||}{d_I(x,y)} \]
 is bounded away from zero for $y$ sufficiently close to $x$, where $||x-y||$ is the Euclidean distance and $d_I(x,y)$ is the {\it intrinsic distance}, which is defined as the infimum of the lengths of all differentiable paths connecting $x$ and $y$.  When some path attains this infimum it is called a {\it geodesic path} and the distance is called the {\it geodesic distance} $d_g(x,y)$.  We use the intrinsic distance since it is defined for all pairs of points, whereas the geodesic distance may technically be undefined when there is no path that attains the infimum. The reason we will require $R(x,y)$ to be bounded away from zero is that the local kernel is defined in the ambient space, which makes it practical to implement. But the theory requires that the kernel decays exponentially in the geodesic distance, meaning that the kernel is localized on the manifold, not just the ambient space.  (The kernels of \cite{pelletier2005kernel,kim2013geometric} explicitly depend on the geodesic distance in order to obtain this decay.)
 
 In order to estimate the density $f$ at a point $x\in \mathcal{M}$ we require the injectivity radius $\textup{inj}(x)$ to be non-zero and the ratio $R(x,y)$ to be bounded away from zero near $x$, which motivates the following definition.
\begin{Def} We say that a point $x\in\mathcal{M} \subset \mathbb{R}^n$ is \emph{tangible} if $\textup{inj}(x)>0$ and within a sufficiently small neighborhood $N$ of $x$, $\inf_{y\in N}R(x,y) > 0$.\end{Def}
We are mainly interested in manifolds for which every point is tangible.
\begin{Def} An embedded manifold $\mathcal{M} \subset \mathbb{R}^n$ is \emph{tangible} if every $x\in \mathcal{M}$  is tangible. If there exist lower bounds for $\textup{inj}(x)$ and $\inf_{y\in\mathcal{M}}R(x,y)$ that are independent of $x$, then $\mathcal{M}$ is called \emph{uniformly tangible}. \end{Def}
 
 For example, every compact manifold as well as linear manifolds such as $\mathbb{R}^n$ are uniformly tangible. This implies that standard KDE theory on Euclidean spaces as well as existing density estimation on manifolds are included in this theory, as well as a large class of noncompact uniformly tangible manifolds.  An example where uniform tangibility fails is the 1-dimensional manifold in $\mathbb{R}^2$ given by $(r(\theta)\cos \theta,r(\theta)\sin\theta)$ where $r(\theta) = 1-1/\theta$ and $\theta \in [1,\infty)$.  Then for any $\theta \in [1,\infty)$,  set $\theta_n = \theta + 2\pi n$. The distances $d_g(\theta_n,\theta_{n+1})$ approach $2\pi$ as $n\to\infty$, whereas $||\theta_n-\theta_{n+1}||$ goes to zero.
   Thus the ratio $R(\theta_n,\theta_{n+1})$ is not uniformly bounded below on the manifold.  However, even in this example, every point on the manifold is tangible.

The key to KDE on embedded manifolds is computing the expectation,
\[ \mathbb{E}\left[\frac{1}{N h^m}\sum_{i=1}^N K\left(\frac{||x-X_i||}{h}\right)\right] = \frac{1}{h^m} \int_{\mathcal{M}} K\left(\frac{||x-y||}{h}\right)f(y)\,dV(y) \]
by splitting the integral over $\mathcal{M}$ into two disjoint regions.  Assume that $h < \textup{inj}(x)$ which implies that for some $\gamma \in (0,1)$ we have $h^{\gamma} < \textup{inj}(x)$ (we will explain the need for $h^{\gamma}$ below).  Since $h^{\gamma}$ is less than the injectivity radius, for any $s \in T_x\mathcal{M}$ with $||s|| < h^{\gamma}$ we can map $s$ to $\mathcal{M}$ diffeomorphically via $\exp_x(s) \in \mathcal{M}$.  For each $x$, we can split the manifold into the image of this ball $\exp_x(B_{h^{\gamma}}(x))$ and the complement $\mathcal{M} \cap \exp_x(B_{h^{\gamma}}(x))^c$.  In the following Lemma we show that the integral over the complement is small.  

\begin{lem}\label{lem1} Let $K:[0,\infty) \to [0,\infty)$ have exponential decay (meaning there exists constants $a_1,a_2 > 0$ such that $K(z) < a_1 e^{-a_2 z}$) and let $x \in \mathcal{M}\subset \mathbb{R}^n$ be tangible, then
\[ \frac{1}{h^m} \left| \int_{\mathcal{M} \cap \exp_x(B_{h^{\gamma}}(x))^c} K\left(\frac{||x-y||}{h}\right)f(y)\,dV(y) \right| < \mathcal{O}(h^{k}) \] 
for any $k \in \mathbb{N}$.
\end{lem}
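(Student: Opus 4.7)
The plan is to decompose $\mathcal{M}\cap \exp_x(B_{h^\gamma}(x))^c$ into an ``annulus'' close to $x$ and an ``exterior'' far from $x$, and show that the kernel decays faster than any polynomial in $h$ on each piece. Using tangibility at $x$, fix a radius $r_0 < \textup{inj}(x)$ such that $R(x,y) \ge c > 0$ for all $y$ in the geodesic ball $N := \exp_x(B_{r_0}(x))$. Shrinking $r_0$ further if necessary, arrange that $\|x-y\| \ge \delta$ for some $\delta>0$ and every $y\in\mathcal{M}\setminus N$; this single-sheet condition is satisfied for the properly embedded tangible manifolds the paper works with (compact, linear, and the spiral-type examples). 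Write the integration domain as $A\cup B$ with $A = \exp_x(B_{r_0}(x)\setminus B_{h^\gamma}(x))$ and $B=\mathcal{M}\setminus N$.

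For the annulus $A$, change variables $y=\exp_x(s)$ with $h^\gamma\le \|s\|<r_0$. Since $r_0<\textup{inj}(x)$, the geodesic distance is $d_g(x,y)=\|s\|\ge h^\gamma$, so the tangibility bound yields $\|x-y\|\ge c\|s\|\ge c\, h^\gamma$. Exponential decay of $K$ then gives $K(\|x-y\|/h)\le a_1 \exp(-a_2 c\, h^{\gamma-1})$ uniformly on $A$. Because $\gamma-1<0$, the factor $\exp(-a_2 c\, h^{\gamma-1})$ beats every power of $h$ as $h\to 0$. Combined with the uniformly bounded Jacobian of $\exp_x$ on the bounded tangent-space annulus and the integrability $\int_\mathcal{M} f\,dV = 1$, the contribution from $A$ is bounded by a constant times $\exp(-a_2 c\, h^{\gamma-1})$, which is $\mathcal{O}(h^k)$ for every $k\in\mathbb{N}$ even after dividing by $h^m$.

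For the exterior $B$, every $y\in B$ satisfies $\|x-y\|\ge \delta$, so $K(\|x-y\|/h)\le a_1 \exp(-a_2 \delta/h)$ uniformly. Integrating against $f\, dV$ again uses only that $f$ is a density, giving a bound of $a_1\exp(-a_2\delta/h)$, which is likewise $\mathcal{O}(h^k)$ for every $k$ after dividing by $h^m$. Summing the two contributions proves the lemma.

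The main obstacle is justifying the ambient separation $\delta>0$ between $x$ and $\mathcal{M}\setminus N$: for compact $\mathcal{M}$ this is immediate, while for the non-compact tangible examples allowed by the paper it follows by choosing $N$ small enough that no other sheet of the embedding reaches $N$. If one wanted to allow a broader class of manifolds where sheets might accumulate at $x$ in ambient norm, one would split $B$ further by ambient distance and combine the kernel's exponential decay with the polynomial growth bound on $f$ to keep the super-polynomial smallness intact; this technical refinement does not alter the conclusion.
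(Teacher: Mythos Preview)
Your argument is correct (with the caveat you already flag) but takes a genuinely different route from the paper's. The paper does not split the complement into an annulus and an exterior; instead it converts the exponential decay of $K$ into a polynomial bound $|K(z)f(x+hz)|\le a\|z\|^{-\ell}$ (this is where the polynomial-growth hypothesis on $f$ from Theorem~\ref{cor2} enters), changes variables to $z=(y-x)/h$, invokes the tangibility bound $R(x,y)>c$ to get $\|z\|\ge c\,h^{\gamma-1}$ on the entire complement, and then crudely dominates the manifold integral by the ambient integral $\int_{\{\|z\|>h^{\gamma-1}\}\subset\mathbb{R}^n}\|z\|^{-\ell}\,dz$, evaluated in polar coordinates with $\ell$ chosen large. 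Your proof is more elementary: you use the exponential decay of $K$ directly and only the fact that $\int_{\mathcal{M}} f\,dV=1$, so you never need the polynomial bound on $f$ or the manifold-to-$\mathbb{R}^n$ comparison. The cost is the extra ambient-separation assumption $\|x-y\|\ge\delta$ on the exterior piece $B$, which is not literally part of the definition of a tangible point; the paper avoids introducing this by applying the inequality $R(x,y)>c$ as if it held on the whole complement rather than merely in the neighborhood where tangibility guarantees it, so in practice both arguments are operating at a comparable level of rigor on that point.
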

\begin{proof}
By exponential decay, $K(z)p(z)$ is integrable for any polynomial $p$, and taking $p$ to be $z^{\ell+\kappa}$ where $\kappa$ is the degree of the polynomial upper bound of $f$ we have $K(z) < ||z||^{-\ell-\kappa}$ and therefore $|K(z)f(x+hz)| < a ||z||^{-\ell}$ for some constant $a$, where $\ell$ was arbitrary.  Making the change of variables $y=x+hz$ we find that $z \in \tilde{\mathcal{M}} \cap \exp_0(B_{h^{\gamma-1}}(0))^c$ where $\tilde{\mathcal{M}}$ is translated so that $z=0$ corresponds to the point $x \in\mathcal{M}$, and $dV(y) = h^m dV(z)$ so we have
\[ \frac{1}{h^m} \left| \int_{\mathcal{M} \cap \exp_x(B_{h^{\gamma}}(x))^c} K\left(\frac{||x-y||}{h}\right)f(y)\,dV(y) \right| \leq  \int_{\tilde{\mathcal{M}} \cap \exp_x(B_{h^{\gamma-1}}(0))^c} a ||z||^{-\ell} \,dV(z). \]
Notice that the decay of the kernel is in the ambient space distance $||z||$, whereas the region $\exp_x(B_{h^{\gamma}}(0))^c$ only guarantees that the geodesic distance from $0$ to $z$ is large.  In order for this integral to be small, we now need the guarantee that large geodesic distance implies large Euclidean distance, which is exactly our assumption that $R(x,y) > 0$.  Since $x$ is tangible, let $R(x,y) > c$, we then have, $||z|| > c d_g(0,z) > h^{\gamma-1}$, so
\[ \int_{\tilde{\mathcal{M}} \cap \exp_x(B_{h^{\gamma-1}}(0))^c} a ||z||^{-\ell} \,dV(z) \leq a c^{-\ell} \int_{\tilde{\mathcal{M}} \cap ||z||>h^{\gamma-1}} ||z||^{-\ell} \,dV(z). \]
We can bound the previous integral by the integral over all $||z||>h^{\gamma-1}$ in $\mathbb{R}^n$, and switching to polar coordinates we find
\begin{align} a c^{-\ell} \int_{\tilde{\mathcal{M}} \cap ||z||>h^{\gamma-1}} ||z||^{-\ell} \,dV(z) &= a V_n c^{-\ell} c^{-\ell} \int_{h^{\gamma-1}}^{\infty} r^{-\ell} r^n \, dr \nonumber \\ &\leq \frac{a V_n}{\ell-n-1} c^{-\ell} h^{(\gamma-1)(-\ell+n+1)} \nonumber \end{align}
for $\ell > n+2$ where $V_n$ is the volume of the unit $n$-ball. Since $\ell$ was arbitrary and $\gamma-1 < 0$, we can bound this integral by $\mathcal{O}(h^{k})$ for any $k$.  
\end{proof}

In order to extend the definition of a tangible manifold to include manifolds with boundary, we consider the tangent space for points on the boundary to be the half space.  In particular, the injectivity radius is the radius of the largest ball such that the exponential map is well defined on the intersection of the ball and the half space.  Similarly for points near the boundary, we consider the tangent space to be a cut space which is cut at $\dM_x$ in the direction $\eta_x$.  These definitions allow points on or near the boundary to still have large injectivity radii.

\begin{proof}[Theorem \ref{thm2}] Lemma \ref{lem1} shows that the integral outside the image of the ball $B_{h^{\gamma}}(x)$ is small to an arbitrarily high order in $h$. We next consider the integral inside the ball
\[ \frac{1}{h^m} \int_{\exp_x(B_{h^{\gamma}}(x))} K\left(\frac{||x-y||}{h}\right)f(y)\,dV(y). \]
Since $h^{\gamma}$ is less than the injectivity radius, we can write the integral in terms of geodesic normal coordinates $s = \exp_x^{-1}(y)$ based at $x$.  In these coordinates we have an expansion of the volume form \cite{heinThesis}
\[ dV(y) = \sqrt{|g(s)|} \, ds = \left(1-\frac{1}{6}\sum_{i,j} R_{ij} s_i s_j + \mathcal{O}(s^3) \right) \, ds \]
where $R_{ij} = \sum_{k} R_{ikjk}$ is the Ricci curvature.  Let $y = \exp_x(s)$ and let $\gamma$ be geodesic curve with $\gamma(0)=x$ and $\gamma(||s||)=y$ parametrized by arclength so that $||\gamma'(t)|| = 1$ for all $t$.  We can expand $\gamma$ in $s$ as
\[ y= \gamma(||s||) = x + s + \text{II}(s,s)/2 + \mathcal{O}(s^3) \]
where $\text{II}(s,s)$ is the second fundamental form which is bilinear in $s$ and perpendicular to $s$.  We also expand the kernel $K(||x-\exp_x(s)||/h) $ centered around $s$ as
\[ K\left(\frac{||x-\exp_x(s)||}{h}\right) = K\left(\frac{||s|| + ||\text{II}(s,s)/2|| + \mathcal{O}(s^3)}{h}\right)= K\left(\frac{||s||}{h}\right) + K'\left(\frac{||s||}{h}\right)\frac{\text{II}(s,s)}{2} + \mathcal{O}(s^3). \]
Finally, we expand the density $\tilde f(s) = f(\exp_x(s))$ around $s=0$ as
\[ f(\exp_x(s)) = f(x) + \sum_{i=1}^m s_i \frac{\partial \tilde f}{\partial s_i}(0) + \mathcal{O}(s^2). \]
We can now derive the new normalization factor
\[ m^{\partial}_0(x) = \int_{\mathbb{R}^{m-1}}\int_{-\infty}^{\dM_x/h} K(||z_{\perp}|| + |z_{\parallel}|) \, dz_{\parallel} dz_{\perp}. \]
To understand this formula, let $x^*$ be a point on the boundary which minimizes the geodesic distance, $d(x,x^*) = \dM_x$ (since a boundary is a closed set, such a point exists although it may not be unique).  If $||x-x^*|| > h$ then the boundary is far enough away that it will have a negligible effect on $m_0$ as shown in Lemma \ref{lem1}.  Thus, we restrict our attention to points with $\dM_x< h$ and we assume the $h$ is sufficiently small that $x^*$ is unique (notice that this will depend on the curvature of the boundary).  We define $\eta_x \in T_x\mathcal{M}$ to be the unit vector which points towards $x^*$, meaning that $\exp_x(\dM_x\eta_x)=x^*$ and if $x$ lies exactly on the boundary we define $\eta_x$ to be the outward pointing unit normal vector.  

 \begin{figure}[h]
\begin{center}
\includegraphics[width=.7\linewidth]{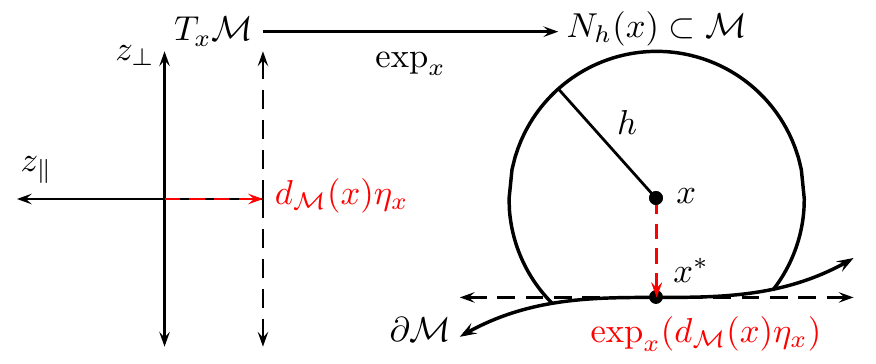}
\end{center}
\caption{\label{tangentcoords} Visualizing the tangent space near the boundary.}
\end{figure}

Next, we decompose the exponential coordinates in the tangent space $B_{h^{\gamma}}(x) \subset T_x\mathcal{M}$ into vectors $s_{\parallel}\eta_x$ (where $s_{\parallel}$ is a scalar) which are parallel to $\eta_x$ and vectors $s_{\perp}$ which are perpendicular to $\eta_x$.  All vectors perpendicular to $\eta_x$ can extend up to length $h^{\gamma}$, whereas vectors parallel to $\eta_x$ can extend up to length $h^{\gamma}$ in the direction $-\eta_x$ (away from the boundary), but only up to length $\dM_x$ in the direction $\eta_x$ (towards the boundary).  With this decomposition, 
the coefficient of $f(x)$ in the expansion is
\[  m_0^{\partial}(x) = h^{-m}\int_{[-h^{\gamma},h^{\gamma}]^{m-1}} \int_{-h^{\gamma}}^{\dM_x} K\left(\frac{||s_{\perp}|| + |s_{\parallel}|}{h}\right) \, ds_{\parallel} ds_{\perp} \]
and this is the leading order term.  Making the change of variables $s = h z$, and recalling from Lemma \ref{lem1} that the integral is negligible beyond $h^{\gamma-1}$, we can extend the integral over $z_{\perp}$ to all of $\mathbb{R}^{m-1} \subset T_x\mathcal{M}$.  On the other hand, the integral over $z_{\parallel}$ cannot be extended to all of $\mathbb{R}\subset T_x\mathcal{M}$, but only to the half-line $(-\infty,\dM_x/h]$ so that the zeroth moment becomes
\[ m_0^{\partial}(x) = \int_{\mathbb{R}^{m-1}} \int_{-\infty}^{\dM_x/h} K(||z_{\perp}|| + |z_{\parallel}|) \, dz_{\parallel} dz_{\perp}. \]
Since $m_0^{\partial}(x)$ is the coefficient of $f(x)$ in the expansion of the standard KDE formula, replacing $m_0(x)$ with $m_0^{\partial}(x)$ in the standard KDE formula yields $f_{h,N}^{\partial}(x)$ which is a consistent estimator  of $f(x)$.  

In order to establish the bias of this estimator, notice that the next term of the expansion is
\[ \sum_{i=1}^m \frac{\partial \tilde f}{\partial s_i}  h^{-m} \int_{||s||<h^{\gamma}} K\left(\frac{||s||}{h}\right)  s_i \, ds \]
which integrates to zero for $x$ sufficiently far from the boundary due to the symmetry of the domain of integration.  However, for points near the boundary, this integral will not be zero.  Instead, this term integrates to zero for every $s \perp \eta_x$ since the domain is symmetric in those directions, so we have $s = s_{\parallel}\eta_x$ and the integral becomes
\[  \eta_x \cdot \nabla f(x) h^{-m} \int_{-[h^{\gamma},h^{\gamma}]^{m-1}} \int_{-h^{\gamma}}^{\dM_x} K\left(\frac{||s_{\perp}||+|s_{\parallel}|}{h}\right) s_{\parallel}  \, ds_{\parallel} ds_{\perp}. \]
Notice that we have rewritten the partial derivatives with respect to the geodesic normal coordinates in terms of the gradient operator by inserting the metric $g_{ij}$ (which becomes the dot product) and the inverse metric $g^{jk}$ (which joins with the partial derivatives to become the gradient operator), namely
\[ \sum_{i=1}^m (\eta_x)_i \frac{\partial \tilde f(0)}{\partial s_i} = \sum_{i,j,k} (\eta_x)_i g_{ij} g^{jk}\frac{\partial \tilde f(0)}{\partial s_k} =  \sum_{i,j} (\eta_x)_i g_{ij} (\nabla f(x))_j = \eta_x \cdot \nabla f(x).\]
Changing variables to $s = hz$ as above, we find the bias to be $m_1^{\partial}(x) \eta_x \cdot \nabla f(x)$ where
\[ m_1^{\partial}(x) =  \int_{\mathbb{R}^{m-1}} \int_{-\infty}^{\dM_x/h} K(||z_{\perp}|| + |z_{\parallel}|) z_{\parallel} \, dz_{\parallel} dz_{\perp}\]
Finally, by independence of $X_i$ we have
\begin{align} \mathbb{E}& \left[ \left(\frac{1}{N h^m}\sum_{i=1}^N K\left(\frac{||x-X_i||}{h}\right) - f(x)\right)^2 \right] = \frac{1}{N^2 h^{2m}}\mathbb{E}\left[ \left(\sum_{i=1}^N (K\left(\frac{||x-X_i||}{h}\right) - h^m f(x))\right)^2 \right] \nonumber \\ &= \frac{1}{N h^{2m}}\mathbb{E}\left[ (K\left(\frac{||x-X_i||}{h}\right)- h^m f(x))^2 \right] \nonumber \\ &= \frac{1}{N}\mathbb{E}\left[ h^{-2m}K\left(\frac{||x-X_i||}{h}\right)^2 - 2h^{-m}K\left(\frac{||x-X_i||}{h}\right) f(x) +  f(x)^2 \right] \nonumber \\ &= \frac{h^{-m}}{N}\mathbb{E}\left[ h^{-m}K\left(\frac{||x-X_i||}{h}\right)^2\right] -  f(x)^2/N + \mathcal{O}(h^2/N)  \nonumber  \end{align}
which verifies the variance formula
\[ m_0^{2,\partial}(x) = \int_{\mathbb{R}^{m-1}}\int_{-\infty}^{\dM_x/h} K\left(||z_{\perp}|| + |z_{\parallel}|\right)^2 \, dz_{\parallel} dz_{\perp}. \]
\end{proof}

Next, using the asymptotic expansions of the previous proof we can easily prove Theorem \ref{thm3}.

\begin{proof}[Theorem \ref{thm3}]  The definition
\[ \mu_{h,N}(x) \equiv \frac{1}{N h^{m+1}}\sum_{i=1}^N K\left(\frac{||x-X_i ||}{h}\right)(X_i-x) \]
implies a  formula for the expectation:
\[ \mathbb{E}[\mu_{h,N}(x)] = \frac{1}{h^{m+1}}\int_{\mathcal{M}}K\left(\frac{||x-y ||}{h}\right)(y-x)f(y)\, dV(y). \]
Following Lemma \ref{lem1} we can restrict this integral to the image of the ball $||s||<h^{\gamma}$ under the exponential map, and then change variables to the geodesic normal coordinates $s\in T_x\mathcal{M}$ with $y=\exp_x(s)$, which yields
\[  \mathbb{E}[\mu_{h,N}(x)] = \frac{1}{h^{m+1}}\int_{||s||<h^{\gamma}}K\left(\frac{||x - \exp_x(s)||}{h}\right)(\exp_x(s)-x)f(\exp_x(s)) \, dV(\exp_x(s)). \]
Applying the asymptotic expansions from the proof of Theorem \ref{thm2}, we find
\begin{align}  \mathbb{E}[\mu_{h,N}(x)] &= \frac{1}{h^{m+1}}\int_{||s||<h^{\gamma}} K\left(\frac{||s||}{h}\right)f(x) s \nonumber \\ &\hspace{10pt}+ K\left(\frac{||s||}{h}\right)s\cdot \nabla f(x) s + K'\left(\frac{||s||}{h}\right) f(x) \text{II}(s,s)/2 + \mathcal{O}\left(s_i^3 K\left(\frac{||s||}{h}\right)\right) \, ds. \nonumber \end{align}
Following the proof of Theorem \ref{thm2} we decompose $s = s_{\perp} \oplus s_{\parallel}\eta_x$ and note that the first term of the integral is zero in every direction except $s = s_{\parallel}\eta_x$ which leads to
\begin{align}  \mathbb{E}[\mu_{h,N}(x)] &= \frac{1}{h^{m+1}}\int_{[-h^{\gamma},h^{\gamma}]^{m-1}} \int_{-h^{\gamma}}^{\dM_x} K\left(\frac{||s_{\perp}|| + |s_{\parallel}|}{h}\right) f(x) s_{\parallel}\eta_x \nonumber \\ &\hspace{10pt}+ K\left(\frac{||s||}{h}\right) s\cdot \nabla f(x) s + f(x) K'\left(\frac{||s||}{h}\right) \text{II}(s,s)/2 + \mathcal{O}\left(s_i^3 K\left(\frac{||s||}{h}\right)\right) \, ds_{\parallel} ds_{\perp}. \nonumber \end{align}
Changing variables to $s = hz$ we have
\begin{align}  \mathbb{E}[\mu_{h,N}(x)] &= \int_{[-h^{\gamma-1},h^{\gamma-1}]^{m-1}} \int_{-h^{\gamma-1}}^{\dM_x/h} K(||z_{\perp}|| +  |z_{\parallel}|)f(x) z_{\parallel} \eta_x \nonumber \\ &\hspace{10pt}+ h K(||z||)  z\cdot \nabla f(x) z+ f(x) K'\left(||z||\right) \text{II}(z,z)/2) + \mathcal{O}\left(h^2 z_i^3 K(||z||)\right) \,  dz_{\parallel} dz_{\perp} \nonumber \end{align}
and extending the integrals to $\mathbb{R}^{m-1}$ and $(-\infty,\dM_x/h)$ respectively (following Theorem \ref{thm2}) yields
\begin{align} \mathbb{E}[\mu_{h,N}(x)] &= \eta_x f(x) \int_{\mathbb{R}^{m-1}} \int_{-\infty}^{\dM_x/h} K(||z_{\perp}|| + |z_{\parallel}|) z_{\parallel}  \,  dz_{\parallel} dz_{\perp} + \mathcal{O}(h \nabla f(x), h f(x)) \nonumber \\
&= -\eta_x f(x) m_1^{\partial}(x) + \mathcal{O}(h,\nabla f(x), hf(x)) \nonumber
\end{align}
where we recall that the definition of the integral $m_1^{\partial}(x)$ incorporates a minus sign.

\end{proof}

\section{Dimension estimation}

Notice that the definition of $K_A$ requires the intrinsic dimension $m$ of the manifold.  Interestingly, the dimension is not required in \cite{LK} to find the Laplace-Beltrami operator of the intrinsic geometry, and in \cite{LK} the factor $\pi^{-m/2}$ is not included in the definition of a prototypical kernel.  However, in order to find a properly normalized density one must know the intrinsic dimension, and so in this paper we include the normalization factor $\pi^{-m/2}$ in the definition of the kernel for convenience.  There are many methods of identifying the dimension from the data, we advocate a method which was introduced in \cite{coifman2008TuningEpsilon} and further refined in \cite{BH14VB,IDM} which simultaneously determines the dimension and tunes the bandwidth parameter $h$.  The method of \cite{BH14VB} uses the fact that when $h$ is well tuned, the unnormalized kernel sum $\frac{1}{N}\sum_{i=1}^N K\left(\frac{||x-x_i||}{h}\right)$ is proportional to $h^m$ as in Theorem \ref{cor2}.  By varying $h$ one can estimate the scaling law $m = \frac{d\log D(h)}{d\log h}$, and when $h$ is well tuned this scaling law will be stable under small changes in $h$.  

In order to simultaneously estimate the dimension $m$ and tune the bandwidth $h$, we first generate a grid of $h$ values, $h_j$ (typically a logarithmic scale is used, such as $h_j = 1.1^{j}$ for $j=-20,...,0,...,20$).  We then evaluate the sum
\[ S(x,h_j) = \frac{1}{N}\sum_{i=1}^N K\left(\frac{||x-x_i||}{h_j}\right) \]
which should be proportional to $h^{m}$ when $h=h_j$ is well tuned.  Motivated by this, we compute the scaling law at each $h_j$ by
\[ \textup{dim}(x,h_j) = \frac{\log(S(x,h_{j+1})) - \log(S(x,h_j)}{\log(h_{j+1}) - \log(h_{j})} \approx  \frac{d\log S}{d\log h}(h_j) \]
which gives us an approximate dimension for each value of $h_j$.  In \cite{BH14VB} they advocated taking value of $h_j$ which maximizes the dimension, however in \cite{IDM} they showed that the extrinsic curvature can lead to overestimation.  Instead, \cite{IDM} advocates looking for persistent values of dimension, which intuitively means one should look for values of the dimension such that the curve $\textup{dim}(x,h_j)$ is flat for a large range of values of $h_j$.  One method is to approximate derivatives of $\textup{dim}(x,h_j)$ with respect to $h_j$ and attempt maximize $\textup{dim}$ while minimizing the derivatives.  

Notice that the above method finds a dimension at a single point $x$.  To estimate a single dimension for an entire data set, one can define $S(h_j)$ to be the average value of $S$ over the entire data set and apply the same procedure.

\end{document}